\pgfplotsset{compat=1.18}
\definecolor{Egraphcolor}{RGB}{0,160,160} 
    \crefname{example}{Example}{Examples}
    \crefname{theorem}{Theorem}{Theorems} 
    \crefname{lemma}{Lemma}{Lemmas}
    \crefname{proposition}{Proposition}{Propositions}
    \crefname{corollary}{Corollary}{Corollaries} 
    \crefname{conjecture}{Conjecture}{Conjectures} 
    \crefname{definition}{Definition}{Definitions}
    \crefname{remark}{Remark}{Remarks} 
    \crefname{equation}{Eq.}{Eqs.} 
    \crefname{figure}{Fig.}{Figs.}
\newtheorem{theorem}{Theorem}[section]
\newtheorem{lemma}[theorem]{Lemma}
\newtheorem{proposition}[theorem]{Proposition}
\newtheorem{corollary}[theorem]{Corollary}
\theoremstyle{definition}
\newtheorem{definition}[theorem]{Definition}
\newtheorem{example}[theorem]{Example}
\newtheorem{remark}[theorem]{Remark}
\newtheorem*{notation*}{Notation}
\numberwithin{equation}{section}
\renewcommand{\k}{{\kappa}}
\newcommand{\C}{\mathcal{C}}
\newcommand{\R}{\mathcal{R}}
\newcommand{\E}{\mathcal{E}}
\newcommand{\X}{\mathcal{X}}
\newcommand{\V}{\mathcal{V}}
\newcommand{\bigzero}{\mbox{\normalfont\Large\bfseries 0}}
\newcommand{\vc}[1]{\ensuremath{\begin{pmatrix}#1\end{pmatrix}}}
\newcommand{\imG}{\text{im}~\Gamma}
\begin{document}

\title[Multistationarity with inflows and outflows]{Multistationarity in semi-open Phosphorylation-Dephosphorylation Cycles}
\author{Praneet Nandan}
\thanks{Laboratoire de Biophysique et Evolution, UMR CNRS-ESPCI 8231 CBI, ESPCI Paris, Université PSL, praneet.nandan@espci.fr} 
\author{Beatriz Pascual-Escudero}
\thanks{Universidad Polit\'ecnica de Madrid, beatriz.pascual@upm.es}
\author{Diego Rojas La Luz}
\thanks{University of Wisconsin-Madison, USA, rojaslaluz@wisc.edu}

\maketitle

\begin{abstract}
    Multistationarity, underlies biochemical switching and cellular decision-making. We study how multistationarity in the sequential $n$-site phosphorylation–\\dephosphorylation cycle is affected when only some species are open, meaning allowed to exchange with the environment (so-called semi-open networks). Working under mass action kinetics, we obtain two complementary structural results for every $n\geq 2$. First, opening any nonempty subset of the substrate species preserves the network’s capacity for nondegenerate multistationarity. Second, opening the enzyme species (both kinase and phosphatase), possibly together with any subset of substrates, always destroys multistationarity. The latter result is proved by a general reduction framework combining the detection of absolute concentration robustness (ACR) with projection onto the remaining species; when the projection is monostationary, the full semi-open system is monostationary. We also illustrate the general method on multi-layer cascade variants and discuss biological implications: opening enzymes acts as a robust “switch” that converts a potentially multistationary phosphorylation module into a monostationary one, while substrate exchange preserves switching capacity and thus the ability to couple cycles to downstream processes.
\end{abstract}

\section{Introduction}
Multistationarity is the property of a dynamical system to have multiple steady states. It is an important property for biological systems as it is relevant for decision-making processes during the functioning of the cellular machinery (\cite{laurent-1999,xiong-2003,ozbudak-2004}) and in the context of Darwinian evolution of these systems (\cite{bagley-1991,laurent-1999}). This has led to interest in studying the parameter regions for which certain biologically relevant chemical reaction networks admit or forbid multistationarity(\cite{BIHAN2020367,semiopen,entrapped,joshi-shiu-II,WS}).

\medskip

One example of a biological system which admits multistationarity is the dual phosphorylation network: a substrate with two active sites can be phosphorylated or dephosphorylated through an enzymatic reaction utilizing a kinase or a phosphatase, respectively. Under the simplest assumption of mass action kinetics, this system has been studied using polynomial ordinary differential equations (ODEs) (\cite{WS,thomson-2009}) and has also become a toy model for developing mathematical machinery focused on understanding multistationarity (\cite{feliu-2020,borri-2021}). In more generality, there exist substrates in nature with more than two active sites (e.g. proteins can have >100 sites (\cite{johnson-2004})). Hence, the general n-site phosphorylation network given below is relevant.

 \begin{small}
 \begin{align*}\notag
S_0 + E \ce{<=>} ES_0 \ce{->}  \dots \ce{<=>}  & ES_{i} \ce{->} S_{i+1}+E  \ce{<=>} \dots \ce{<=>}  ES_{n-1}  \ce{->} S_n+E \\
S_n + F  \ce{<=>} FS_n \ce{->} \dots \ce{<=>}  & FS_{i}  \ce{->} S_{i-1}+F \ce{<=>} \dots \ce{<=>}  FS_1 \quad \ce{->} S_0+F.\\
\end{align*}
\end{small}

This network assumes that for a substrate that can be phosphorylated at n-sites, the process happens for one site at a time and that each encounter modifies one site, the mass action rate dependenting only on the number of sites already phosphorylated. Under mass action kinetics one obtains a system of polynomial ODEs modelling the evolution of species' concentrations in time, which involves 3$n$+3 variables (one for each species) with 6$n$ parameters (for as many reactions). The number of steady states of this system has been studied extensively, and multistationarity has been reported for arbitrary $n>1$ (\cite{gunawardena-2005}) with bistability being reported numerically (\cite{salazar-2006}). For $n=1$, there is a unique positive globally asymptotically stable steady state \cite{angeli-2006}. In \cite{WS}, the authors show the existence of reaction rates for which the system has $n$ steady states if $n$ is even and $n$+1 if odd. Such high number of steady states are attributed to the fact that a large number of phosphorylation sites compete for the same enzymes. Note that this is a sequential phosphorylation cycle \cite{feliu-2020}, in opposition to the processive one \cite{Conradi2015}, which does not admit multistationarity.

\medskip

Inflow and outflow (flow) reactions for a given species are a replacement for the exchange of this species with the environment outside the reaction system (through osmosis in case of a membrane, or even just diffusion), or for other reactions that involve this species and which we do not consider a part of the system of study. This is important from the biological perspective, as the cellular mechanisms of metabolism are often very complex, and involve other physical transformations not tractable with a chemical reaction network formalism. Adding inflows and outflows for a species is what we refer to here as 'opening' it, and is therefore an approximation to how these processes and other reactions can change the concentration of the species in the system we focus on. The work in \cite{entrapped} shows that when a reaction network admits multistationarity, by adding inflow and outflow reactions for all species, the resulting network will admit multistationarity as well. Some other recent works (\cite{YSE25}) aim at understanding how multistationarity arises in fully open networks. However, this is not always a realistic scenario, as there could be specific species which are 'trapped' in the reaction system, unable to exchange freely with the environment (due to, for example, semi-permeable membranes in the cell) \cite{semiopen,entrapped}. Furthermore, some species can be so short-lived (due to high outgoing reaction rates in the network) that effectively the out-of-system reactions do not have time to affect their concentrations.

\medskip

In this article, we address the question of multistationarity in the case when only some of the reacting species are allowed to exchange with the environment. Apart from the reasons mentioned before concerning applications, this also gives us an insight into the role that different species play in creating a highly multistationary system. Furthermore, it helps develop and test more general mathematical tools to check for multistationarity in reaction networks which are similar to the phosphorylation cycle. For this, \Cref{sec:preliminaries} introduces the necessary background, notation and tools. 

\medskip

In Section \ref{sec:substrates}, we show that if any of the species classified as substrate is allowed to exchange with the environment, while the other species are either trapped or short-lived, multistationarity in the system is conserved. These substrates are often molecules which are important for the functioning of biological systems and are undergoing a phosphorylation process to get used up in other processes after their transformation, hence they are involved in other reactions outside the cycle. This allows the phosphorylation cycle to be coupled to these other reactions without losing its multistationarity.

\medskip

In Section \ref{sec:enzymes} we develop a reduction method, that can be applied to the phosphorylation cycle if only the species classified as enzymes are open. In this system, we find that by openning both enzymes multistationarity is destroyed. This implies that allowing fast exchange of enzymes with the environment can be used as a 'switch' to turn a highly multistationary system into a robust monostationary system. Such 'switches' are important for evolutionary selection, as the ability to display different behavior based on environment allows survival  (\cite{balaban-2004,visco-2010}) and adaptability (\cite{blake-2006,kashiwagi-2006}). We also show that this can be applied to a cascade type process, yielding similar results. We are able to showcase part of the rich behavior such multistationary systems are capable of displaying when they interact with other systems and their environment. The reduction formalism can be applied to other more general reaction networks too, which will be the topic of a forthcoming paper, and hence can have consequences beyond the systems in play.

\section{Preliminaries and General Toolbox}\label{sec:preliminaries}

\subsection{Notation and Definitions}

We will write $\mathbb{R}_{>0}$ for the set of strictly positive real numbers, and $\mathbb{R}_{\ge0}$ for the set of nonnegative real numbers.

\begin{definition}
A \textit{reaction network} can be represented by a directed graph, where each arrow represents an interaction between two complexes: the source or reactant, and the product. Each of these complexes is a linear combination of the species involved with nonnegative integer coefficients. That is, a reaction network is a triplet $G=(\X,\C,\R)$ where 
\begin{enumerate}
    \item $\X=\left\{ X_1,\ldots ,X_n\right\}$ is the set of \textit{species},
    \item $\R =\left\{ \sum_{i=1}^n y_{ij} X_i \stackrel{\k _j}{\rightarrow} \sum_{i=1}^n y_{ij}' X_i\right\}_{j=1}^{r}$  for some $y_{ij},y_{ij}'\in \mathbb{Z}_{\geq 0}$ is the set of reactions, each being an edge from a \textit{source} and a \textit{product}, which are labeled by \textit{reaction rates} $\k _j\in \mathbb{R}_{>0}$ and
    \item $\C=\left\{ \sum_{i=1}^n y_{ij} X_i\right\}_{j=1}^r\cup\left\{ \sum_{i=1}^n y'_{ij} X_i\right\}_{j=1}^r$ is the set of \textit{complexes}, appearing as source or product in some reaction. 
\end{enumerate}
\end{definition}

We identify each complex $\sum_{i=1}^n y_i X_i$ with the vector of coefficients $y=(y_1,\ldots ,y_n)\in \mathbb{Z}^n_{\geq 0}$, and write $y\to y'$ for the reaction between the corresponding complexes. This identifies reaction networks with Euclidean embedded graphs (E-graphs). 

\begin{definition}[\cite{Craciun2019}]
    An \textit{Euclidean embedded graphs (E-graph)} is a finite simple directed graph $G=(V, E)$, where $V \subseteq \mathbb{Z}_{\geq 0}^n$ is the set of vertices, and $E \subseteq V \times V$ is the set of directed edges.
\end{definition}

Thus, $\C$ is in correspondence with the set of vertices $V$, and $\R$ is in correspondence with the set of edges $E$. This perspective will be useful when thinking about projected networks (see \cref{def:projected_network}).

\begin{example}\label[example]{ex:E-graph}
Consider the following reaction network $G=(\X, \C, \R)$:

\begin{small}
\begin{align}\label{eq:E-graph}
\begin{aligned}
Y \ce{->} X \ce{->} Z,\\
2Z \ce{->} Y + Z,
\end{aligned}
\end{align}
\end{small}

where $\X = \{X, Y, Z\}$, $\C = \{X, Y, Z, 2Z, Y + Z\}$, and the set of reactions $\R$ is the one shown. This can be represented as a E-graph embedded into 3-dimensional space:

\begin{align}\label{tikz:E-graph}
\begin{aligned}
\begin{tikzpicture}[scale=1.2]

\begin{axis}[view={105}{30}, axis line style=white, width=5.5cm, height=5.5cm, ticks=none, xmin=0, xmax=1.9, ymin=0, ymax=1.9, zmin=0, zmax=2.5]

    \tikzset{bullet/.style={inner sep=1pt,outer sep=1.5pt,draw,fill,Egraphcolor,circle}};
    \tikzset{myarrow/.style={arrows={-stealth},thick,Egraphcolor}};
    \tikzset{mygrid/.style={very thin,gray!50}};
    \tikzset{myaxis/.style={thick,gray}};    
    \newcommand{\ab}{2.5};    
 
    \draw[mygrid] (1,0,\ab) -- (1,0,0) -- (1,\ab,0);
    \draw[mygrid] (2,0,\ab) -- (2,0,0) -- (2,\ab,0);
    \draw[mygrid] (0,1,\ab) -- (0,1,0) -- (\ab,1,0);
    \draw[mygrid] (0,2,\ab) -- (0,2,0) -- (\ab,2,0);
    \draw[mygrid] (0,\ab,1) -- (0,0,1) -- (\ab,0,1);
    \draw[mygrid] (0,\ab,2) -- (0,0,2) -- (\ab,0,2);
        
    \draw[myaxis] (0,0,0) -- (\ab,0,0);    
    \draw[myaxis] (0,0,0) -- (0,\ab,0);    
    \draw[myaxis] (0,0,0) -- (0,0,\ab);    
    
    \node[bullet] (X) at (1,0,0) {};
    \node[bullet] (Y) at (0,1,0) {};
    \node[bullet] (Z) at (0,0,1) {};
    \node[bullet] (2Z) at (0,0,2) {};
    \node[bullet] (Y+Z) at (0,1,1) {};
    
    \node [below right] at (X) {\small $\mathsf{X}$};
    \node [above right] at (Y) {\small $\mathsf{Y}$};
    \node [right] at (Z) {\small $\mathsf{Z}$};
    \node [above right] at (2Z) {\small $\mathsf{2Z}$};
    \node [above right] at (Y+Z) {\small $\mathsf{Y+Z}$};
    \node[right] at (0,1.25,2.25) {$G$};
    
    \draw[myarrow,transform canvas={xshift=-1pt,yshift=0pt}]  (Y) to node[xshift=0pt,yshift=-3pt] {} (X);
    \draw[myarrow,transform canvas={xshift=0pt,yshift=1pt}] (X) to node[xshift=0pt,yshift=-5pt] {} (Z);
    
    \draw[myarrow,transform canvas={xshift=1pt,yshift=0.5pt}] (2Z) to node[xshift=12pt,yshift=4pt] {} (Y+Z);
\end{axis}

\draw[rounded corners,lightgray] (current bounding box.south west) rectangle (current bounding box.north east);

\end{tikzpicture}
\end{aligned}
\end{align}
\end{example}

We will also use the following graph-theoretic property of some reaction networks:

\begin{definition}\label[definition]{def:weakly_reversible}
    We say that $G$ is \textit{weakly reversible} if any edge is part of an oriented cycle in $G$ or, equivalently, any connected component of G is \textit{strongly connected} (a strongly connected directed graph is one where there exists a directed path between every pair of vertices. That is to say, for any $y\to y'\in \R$, there is a path $y'\to \ldots \to y$ given by reactions in $\R$.
\end{definition}

According to \cite{joshi-shiu-II}, a reaction network is a CFSTR (continuous-flow stirred-tank reactor) if, for any species $X_i$ in the network, the \textit{outflow} reaction $X_i \ce{->} 0$ is in the network. A CFSRT is fully open if for all species the pair $0 \ce{<=>} X_i$ is in the network, that is, there are both outflow and \textit{inflow} reactions for all species.

\begin{definition}
    We say that a species $X \in \X$ is \textit{closed in} $G$ if none of the reactions $0 \ce{->} X$ and $X \ce{->} 0$ are in $\R$. In contrast, we say that $X$ is \textit{open in} $G$ if the reactions $0 \ce{->} X$ and $X \ce{->} 0$ are both in $\R$. We say that $X$ is \textit{partially open} $G$ if exactly one of the reactions $0 \ce{->} X$ and $X \ce{->} 0$ is in $\R$.   That is, a fully open CFSRT would be a network in which all species are open. We say that $G$ is a \textit{semi-open network} if some species are open in $G$, but not all.
\end{definition}

Given a reaction network $G=(\X,\C,\R)$ we aim at modeling the evolution of the species' abundances through time by assigning a corresponding dynamical system given by differential equations of the form:
\begin{equation}\label{eq:general_kinetics}
    \dfrac{dx_i}{dt} = f_i(x_1,\dots,x_n) = \sum_{y \to y'\in \R} \nu_{y\to y'}(x(t)) (y'_i - y_i), \quad i=1,\dots,n,
\end{equation}
where $x(t) = (x_1(t),\dots,x_n(t))$, $x_i$ stands for the concentration of species $X_i$, $f_i$ is its \textit{species formation function}, and $(y'_i - y_i)$ represents the net production of species $X_i \in \X$ resulting from the reaction $y\to y' \in \R$. We call a choice of continuously differentiable functions $\nu_{y\to y'}$ a choice of \textit{kinetics}. We now construct the \textit{stoichiometric matrix }$\Gamma \in \mathbb{R}^{n\times r}$, where the $i$-th column is the vector $y' - y$ if the $i$-th reaction is given by $y\to y'$. We call the space generated by the columns of $\Gamma$ the \textit{stoichiometric subspace} $\imG$. We denote by $\nu(x)$ the vector with entries $\nu_{y\to y'}(x)$. Then we can rewrite \eqref{eq:general_kinetics} as:
\begin{equation}\label{eq:stoichiometric_general}
    \dfrac{dx}{dt} = f(x) = \Gamma \nu(x),
\end{equation}
where $f=(f_1,\ldots,f_n)$.

We will be dealing with a widespread choice of kinetics called \textit{mass action kinetics}, in which the rate of each reaction is proportional to the product of the concentrations of the reactant species of the reaction. Thus, given a choice $\kappa\in \mathbb{R}^r_{>0}$ of reaction rates on $G$, the associated mass action kinetics are defined by monomials of the form:
\[\nu_{y\to y'}(x) = \kappa_{y\to y'} x_1^{y_1}\cdots x_n^{y_n},\]
and the associated mass action differential equations for the \textit{mass action system} $(G,\k)$ can be written as
\begin{equation}
    \dfrac{dx_i}{dt} = f_i(x_1,\dots,x_n) = \sum_{y \to y'\in \R} \kappa_{y\to y'} x_1^{y_1}\cdots x_n^{y_n} (y'_i - y_i), \quad i=1,\dots,n
\end{equation}

Those points $x=(x_1,\ldots ,x_n)\in \mathbb{R}^n_{>0}$ where $f_1(x)=\ldots =f_n(x)=0$ are the \textit{positive steady states} of the mass action system. We will denote by $\V(G,\k^*)$ the set of positive steady states of the mass action system $(G,\k)$ with fixed reaction rates $\k^*$.

Note that if $w=(w_1,\ldots ,w_n)\in \mathbb{R}^n$ is a vector in the left kernel of the stoichiometric matrix $\Gamma$, then
\begin{equation}
    w \cdot \dfrac{dx}{dt} = w\cdot \Gamma\nu(x) = 0,
\end{equation}
and $w_1 x_1 + \dots + w_nx_n$ will be constant in time. Thus, we call $w$ a \textit{conservation law}. Conservation laws define \textit{positive stoichiometric compatibility classes} as
$$N_{T}=\left\{ x\in \mathbb{R}^n_{>0}: Wx=T\right\} $$
where $W\in \mathbb{R}^{d\times n}$ is a matrix whose rows form a basis of all conservation laws and $T\in \mathbb{R}^d$ is a vector of \textit{total amounts}. Each positive stoichiometric compatibility class is a subset of positive concentrations which give the same vector of total amounts. For a fixed vector $T^*$ of total amounts, we will write  $\V(G,\k)\cap N_{T^*}$ for the set of positive steady states in such (positive) stoichiometric compatibility class. Note that we can think of $W$ as the matrix generating the linear space given by the (left) kernel of the stoichiometric matrix, $\ker \Gamma^T$. Thus $N_{T}$ are just translations of the stoichiometric subspace intersected with $\mathbb{R}^n_{>0}$. 

\begin{remark}\label[remark]{rmk:same_compatibility}
    Given $x, x'\in\mathbb{R}^n_{>0}$, they are both in $N_T$ if, and only if, $x' - x \in \imG$ is in the stoichiometric subspace. Indeed, this last condition is equivalent to $W(x' - x) = 0$, so if $x\in N_T$ and $W(x' - x) = 0$, then $Wx' = Wx = T$, and $x'\in N_T$. Similarly, if $x$ and $x'\in N_T$, then $W(x' - x) = 0$.
\end{remark}

\begin{example}
    Consider the reaction network $G$ given by \eqref{eq:E-graph} from \cref{ex:E-graph}. The mass action system generated by $G$ is:
    \begin{equation}
        \dfrac{dx}{dt} = \kappa_{Y\to X}~ x_Y \vc{1\\-1\\0} + \kappa_{X \to Z}~ x_X \vc{-1\\0\\1} + \kappa_{2Z\to Y+Z}~x_Z^2 \vc{0\\1\\-1}
    \end{equation}
    where $x_X, x_Y$ and $x_Z$ represent the concentrations of $X, Y$ and $Z$ respectively. Thus, its stoichiometric matrix is
    \[\Gamma = \begin{pmatrix}
        1 & -1 & 0\\
        -1 & 0 & 1\\
        0 & 1 & -1
    \end{pmatrix},\]
    and we see that there exists a linear conservation law $w = (1, 1, 1)$, meaning that $x_X + x_Y + x_Z = T$, the total concentration is conserved.
\end{example}

The following technical definition will be necessary for the results in \Cref{sec:enzymes}: 

\begin{definition}\label[definition]{def:independently_conserved}
      Let $G=(\X,\C,\R)$ be a network and consider a subset $\E = \{ E_i \}_{i=1,\dots, k} \subset \X$ of species. We say that $\E$ is \textit{independently conserved} in $G$ if there is a set of conservation laws $\{ L_1,\ldots ,L_k\}\subset \mathbb{R}^n$ such that for $i, j=1,\ldots ,k$, the $i$-th entry is nonzero in $L_i$ and zero in every other $L_j$. That is, if $E_i$ appears in $L_i$ but not in $L_j$ when $j\neq i$.
\end{definition}

\begin{remark}
    Roughly speaking, systems with catalysts (or enzymes) satisfy this, as the total amount of a given enzyme will result in a conservation law for it, which will include the enzyme and all intermediate complexes of which the enzyme is part. It is the case, for instance, of the $n$-site double phosphorylation cycle (see \Cref{sec:n-site}) and phosphorylation cascades (see \Cref{sec:cascade_monost}).
\end{remark}

\begin{remark}\label[remark]{rem:indep_conserved-invertible}
    Note that, if $\E = \{ E_i \}_{i=1,\dots, k} \subset \X$ are independently conserved, then there are $d$ linearly independent conservation laws (the network has rank $n-d$). By reordering species appropriately we will have a block matrix
    $$\left(\begin{array}{cc}
       Id_k  &  *\\
        0_{(d-k)\times k} & *
    \end{array}\right)$$
    where the first $k$ rows correspond to the $L_1,\ldots ,L_k$ from \Cref{def:independently_conserved}.
    
    Note that the converse is also true and then, in particular, for a network of rank $n-d$ we can always choose a subset of $d$ independently conserved species in $\X$, as Gaussian elimination will allow us to obtain such a matrix, if the columns (hence the species) are reordered appropriately.
\end{remark}

A steady state $x^*$ is \textit{nondegenerate} if $\ker\left(J_{f,x}(x^*)\right)\cap \imG=\{0\}$, where $J_{f,x}$ is the Jacobian matrix of $f$ with respect to $x$ and $\imG=\{y: Wy=0\}$ is the \textit{stoichiometric subspace}. Equivalently, a steady state $x^*$ is nondegenerate if the Jacobian matrix of the (square) system $\{ f(x)=0,Wx=T\}$ has full rank $n$ (see \cite[Section 6]{WF13}). Note that this does not depend on the particular choice of $W$.

We say that a network $G$ with mass action kinetics \textit{admits multistationarity} if there exists a choice of $\k^*\in \mathbb{R}^r_{>0}$ and $T^*\in \mathbb{R}^d$ such that the system of polynomial equations $f(\k^*,x)=0,Wx=T^*$ has more than one positive solution. That is, there is a choice of reaction rates for which there are at least two positive steady states in some stoichiometric compatibility class. If these can be found nondegenerate, we say that the network \textit{admits nondegenerate multistationarity}. We will say that a network with mass action kinetics is \textit{monostationary} if it does not admit multistationarity.

\subsection{Network modifications}\label{sec:network_modifications}

In order to study the capacity for multistationarity of networks when adding certain flow reactions, let us define the following modifications that one can perform on a network.

\begin{definition}
    Let $G=(\X,\C,\R)$ be a network and consider a subset $\E \subset \X$ of species which are closed in $G$. We define a new network $G_{\E\rightleftharpoons 0}$ as the network given by $(\X, \C\cup \{0\}\cup\E,\R')$, where $\R'=\R \cup \left\{ E\rightleftharpoons 0\right\}_{E\in \E}$ and call it the \textit{open network on $\E$} in $G$.
\end{definition}

\begin{definition}\label[definition]{def:projected_network}
     Let $G=(\X,\C,\R)$ be a network and consider a subset $\E = \{ E_i \}_{i=1,\dots, k} \subset \X$ of species. Consider the network  $G_{-\E}$ obtained from $G$ using the following operations: For $i=1,\dots,k$,
    \begin{enumerate}
        \item remove $E_i$ from each complex, replacing the complex $E_i$ with the $0$ complex (if $E_i$ is a complex of $G$),
    
        \item remove any self-loops.
    \end{enumerate}

    We call this modification the \textit{projected network} \textit{on the complement of} $\E$, following \cite{doi:10.1137/110840509}. The name comes from the fact that we can think of $G_{-\E}$ as a projection of the complexes and reaction vectors onto the complement of $\E$ if one considers the network as an embedded graph. 
    
    Let $z\in\mathbb{R}^n_{\geq 0}$ be a steady state of the dynamical system given by $G$. Then, the projection of the steady state $z$ onto $\E$, $z_{\E}$, is defined by projecting $z$ over the coordinates given by $\E$.
\end{definition}

\begin{remark}
    Note that this is an embedded network of $G$ and of $G_{E\rightleftharpoons 0}$, according to the definition in \cite{joshi-shiu-II}. It is also called a reduced reaction network in \cite{Anderson_GAC}.
\end{remark}

\begin{remark}\label[remark]{rmk:projected_stoich}
    Note that the stoichiometric subspace of $G$, $\imG$, is given by the span of $y' - y$, for $y\to y'\in \R$, and similarly the stoichiometric subspace of $G_{-\E}$, $\imG_{-\E}$,is given by the span of $y' - y$, for $y\to y'\in \R_{-\E}$. But any reaction $y\to y'\in \R_{-\E}$ must come from the projection of a reaction $\overline{y} \to \overline{y}' \in \R$ into $\X-\E$ when seen as an E-graph, by construction. That is $\pi(\overline{y})=y$ and $\pi(\overline{y}')=y'$, where $\pi:\mathbb{R}^{|\X|}\longrightarrow \mathbb{R}^{|\X\setminus \E|}$. Thus, $\imG_{-\E}$ is given by the projection of the stoichiometric subspace $\imG$ over $\X-\E$.
\end{remark}

\begin{example}\label[example]{ex:projection}
    Continuing \cref{ex:E-graph}, consider the projection of the reaction network $G$ given by \eqref{eq:E-graph} over the complement of $\E=\{Z\}$, that is $G_{-Z}$, which is given by:

    \begin{small}
    \begin{align}\label{eq:G_E}
    \begin{aligned}
    Y \ce{->} X \ce{->} 0,\\
    0 \ce{->} Y.
    \end{aligned}
    \end{align}
    \end{small}

    This reaction network can be seen as a literal projection of the E-graph given in \eqref{tikz:E-graph} into the $XY$ plane:

    \begin{align}\label{tikz:projection}
    \begin{aligned}
    \begin{tikzpicture}[scale=1.2]
\tikzset{mybullet/.style={inner sep=1.2pt,outer sep=4pt,draw,fill,Egraphcolor,circle}};
\tikzset{myarrow/.style={arrows={-stealth},thick,Egraphcolor}};

\begin{axis}[view={105}{30}, axis line style=white, width=5.5cm, height=5.5cm, ticks=none, xmin=0, xmax=1.9, ymin=0, ymax=1.9, zmin=0, zmax=2.5]

    \tikzset{bullet/.style={inner sep=1pt,outer sep=1.5pt,draw,fill,Egraphcolor,circle}};
    
    \tikzset{mygrid/.style={very thin,gray!50}};
    \tikzset{myaxis/.style={thick,gray}};    
    \newcommand{\ab}{2.5};    
 
    \draw[mygrid] (1,0,\ab) -- (1,0,0) -- (1,\ab,0);
    \draw[mygrid] (2,0,\ab) -- (2,0,0) -- (2,\ab,0);
    \draw[mygrid] (0,1,\ab) -- (0,1,0) -- (\ab,1,0);
    \draw[mygrid] (0,2,\ab) -- (0,2,0) -- (\ab,2,0);
    \draw[mygrid] (0,\ab,1) -- (0,0,1) -- (\ab,0,1);
    \draw[mygrid] (0,\ab,2) -- (0,0,2) -- (\ab,0,2);
        
    \draw[myaxis] (0,0,0) -- (\ab,0,0);    
    \draw[myaxis] (0,0,0) -- (0,\ab,0);    
    \draw[myaxis] (0,0,0) -- (0,0,\ab);    

    \node[bullet] (0) at (0,0,0) {};
    \node[bullet] (X) at (1,0,0) {};
    \node[bullet] (Y) at (0,1,0) {};
    \node[] (Z) at (0,0,1) {};

    \node [below right] at (0) {\small $\mathsf{0}$};
    \node [below right] at (X) {\small $\mathsf{X}$};
    \node [above right] at (Y) {\small $\mathsf{Y}$};
    \node [above right] at (Z) {\small $\mathsf{Z}$};
    \node[right] at (0,1.25,2.25) {$G_{-Z}$};
    
    \draw[myarrow,transform canvas={xshift=-1pt,yshift=0pt}]  (Y) to node[xshift=0pt,yshift=-3pt] {} (X);
    \draw[myarrow,transform canvas={xshift=0pt,yshift=1pt}] (X) to node[xshift=0pt,yshift=-5pt] {} (0);
    
    \draw[myarrow,transform canvas={xshift=1pt,yshift=0.5pt}] (0) to node[xshift=12pt,yshift=4pt] {} (Y);
\end{axis}

\begin{scope}[shift={(5,0.7)}, scale=2.5]

\draw [step=1, gray, very thin] (0,0) grid (1.25,1.25);
\draw [ -, black] (0,0)--(1.25,0);
\draw [ -, black] (0,0)--(0,1.25);

\node[mybullet]  (0) at (0,0) {};
\node[mybullet]  (X) at (1,0) {};
\node[mybullet]  (Y) at (0,1) {};

\node [below right] at (0) {\small $\mathsf{0}$};
\node [below right] at (X) {\small $\mathsf{X}$};
\node [above right] at (Y) {\small $\mathsf{Y}$};

\draw[myarrow,transform canvas={yscale=2, yshift=-11pt}]  (X) to node[above] {} (0);
\draw[myarrow,transform canvas={xscale=2, xshift=-84pt}]  (0) to node[right] {} (Y);
\draw[myarrow,transform canvas={xshift=0pt, yshift=0pt}]  (Y) to node[pos=0.65, below] {} (X);

\node[right] at (1.1,1.2) {$G_{-Z}$};
\node at (1.75,0.5) {\phantom{0}};

\end{scope}

\draw[rounded corners,lightgray] (current bounding box.south west) rectangle (current bounding box.north east);

\end{tikzpicture}
    \end{aligned}
    \end{align}

    Note that it has stoichiometric subspace equal to all of $\mathbb{R}^2$, and thus no conservation laws are left.
\end{example}

A third operation, where a second network is involved, will be necessary for the coming results:

\begin{definition}
     Let $G=(\X,\C,\R)$ be a network and consider a subset $\E \subset \X$ of species, and consider any second network $H=(\E,\C',\R')$ involving these species. We define a new network from them, given as $G\cup H=(\X,\C\cup \C',\R\cup \R')$. 
\end{definition}

\cite{Banaji2018} investigated modifications to a chemical reaction network that preserve nondegenerate multistationarity. Out of those results, we state here two Theorems which we use later in \Cref{thm:induction}.
\begin{theorem}[Adding intermediate complexes involving new species, \textit{Theorem 6} in \cite{Banaji2018}]\label{thm:interb}
    Let $G=(\X,\C,\R)$ be a network, let Y be a list of $k$ new species, and let $G'$ arise from $G$ by replacing each of the reactions:
    \begin{equation*}
        a_i.X\rightarrow b_i.X \text{ with a chain }a_i.X\rightarrow c_i.X+\beta_i.Y \rightarrow b_i.X,(i=1,\dots,r)
    \end{equation*}
Suppose further that the new species Y enters nondegenerately into $G'$ in the sense that $\beta :=(\beta_1|\beta_2|\dots|\beta_r)$ has rank $r$ ($k\geq r$). $a_i,b_i$ and $c_i$ are nonnegative vectors and any or all may coincide. If $G$ admits nondegenerate multistationarity then so does $G'$.
\end{theorem}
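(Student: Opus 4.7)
The plan is to construct, starting from two nondegenerate positive steady states $x^{(1)},x^{(2)}$ of $(G,\kappa)$ in a common stoichiometric compatibility class, two nondegenerate positive steady states of $(G',\kappa')$ in a common compatibility class. I would set the rate $\alpha_i$ of the first half-reaction $a_i.X\to c_i.X+\beta_i.Y$ equal to $\kappa_i$ and leave the rate $\gamma_i$ of the second half-reaction $c_i.X+\beta_i.Y\to b_i.X$ as a positive parameter. A direct computation on the mass action vector field of $G'$ shows that whenever $(x,y)\in\mathbb{R}^n_{>0}\times\mathbb{R}^k_{>0}$ satisfies $\gamma_i\,x^{c_i}\,y^{\beta_i}=\kappa_i\,x^{a_i}$ for every $i$, the $X$-component of the field coincides with that of $(G,\kappa)$ at $x$ while the $Y$-component vanishes identically. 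Hence any positive steady state of $(G,\kappa)$ lifts to one of $(G',\kappa')$ as soon as this algebraic system in $y$ admits a positive solution, and passing to logarithms turns it into the linear system $\beta^{T}\log y=v(x,\gamma)\in\mathbb{R}^{r}$; surjectivity of $\beta^{T}:\mathbb{R}^{k}\to\mathbb{R}^{r}$ — the content of the rank-$r$ hypothesis on $\beta$, which uses $k\geq r$ — provides a nonempty $(k-r)$-dimensional affine family of positive solutions for each $x$ and $\gamma$. Applying this to $x^{(1)}$ and $x^{(2)}$ produces positive extensions $y^{(1)},y^{(2)}$ with $(x^{(j)},y^{(j)})$ a positive steady state of $(G',\kappa')$.

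The main obstacle is the compatibility-class condition $(x^{(1)}-x^{(2)},\,y^{(1)}-y^{(2)})\in\mathrm{im}\,\Gamma'$. A short calculation exploiting the linear independence of $\beta_{1},\ldots,\beta_{r}$ yields $\mathrm{im}\,\Gamma'\cap(\mathbb{R}^{n}\times\{0\})=\imG\times\{0\}$, so the $X$-part of the difference already belongs to $\mathrm{im}\,\Gamma'$; what remains is to force $(0,\,y^{(1)}-y^{(2)})\in\mathrm{im}\,\Gamma'$. This imposes a number of linear conditions on $y^{(1)}-y^{(2)}$ equal to the number of independent conservation laws of $G'$ not inherited from $G$, whereas the free parameters — the $r$ rates $\gamma_i$ together with two $(k-r)$-dimensional $\ker\beta^{T}$ freedoms for $y^{(1)}$ and $y^{(2)}$ — exceed them on dimensional grounds. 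I would close this step with an explicit dimension count followed by a perturbation of $\gamma$ and of $y^{(j)}$ within their solution families, realizing the required identity while keeping both $y^{(j)}$ strictly positive.

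Finally, I would verify nondegeneracy at each lifted steady state. The Jacobian of the vector field of $(G',\kappa')$ at $(x^{*},y^{*})$ has a natural block structure in which the block obtained by differentiating $\dot y$ with respect to $y$ carries a factor of $\beta$ of full rank $r$, and a Schur-complement expansion of the augmented square system $\{f_{G'}=0,\,W'(x,y)=T'\}$ reduces its Jacobian determinant, up to a positive factor, to the determinant of the analogous augmented system for $(G,\kappa)$ at $x^{*}$. The latter is nonzero by the nondegeneracy of $x^{(1)}$ and $x^{(2)}$, so both lifted steady states of $(G',\kappa')$ are nondegenerate, giving nondegenerate multistationarity for $G'$.
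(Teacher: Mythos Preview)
The paper does not prove this theorem: it is quoted verbatim as Theorem~6 of \cite{Banaji2018} and used as a black box in the proof of \Cref{thm:induction}. There is therefore no in-paper argument to compare against, and any assessment of your sketch has to be on its own merits.

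Your lifting of steady states is correct: with $\alpha_i=\kappa_i$ and the matching condition $\gamma_i x^{c_i} y^{\beta_i}=\kappa_i x^{a_i}$, the $X$-block of the $G'$ vector field collapses to that of $G$ and the $Y$-block vanishes, and the rank-$r$ hypothesis on $\beta$ indeed makes $\beta^{T}\log y=v(x,\gamma)$ solvable for every $x>0$ and $\gamma>0$. Your description of $\mathrm{im}\,\Gamma'$ and of $\mathrm{im}\,\Gamma'\cap(\mathbb{R}^n\times\{0\})$ is also right. The genuine gap is the compatibility-class step. The map taking the free parameters $(\gamma,u^{(1)},u^{(2)})$ (with $u^{(j)}\in\ker\beta^{T}$ shifting $\log y^{(j)}$) to the vector of values of the new conservation laws on $y^{(1)}-y^{(2)}$ is \emph{nonlinear}, because $y^{(j)}$ depends exponentially on $\log y^{(j)}$; a bare dimension count does not establish that the required level set is nonempty, let alone that it meets the positivity region. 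You would need at minimum a submersion/transversality argument (e.g.\ computing the differential of this map at a well-chosen point and showing it is onto), and nothing in your sketch supplies this. Similarly, the nondegeneracy claim ``up to a positive factor the augmented Jacobian for $G'$ reduces to that for $G$'' hides real work: the $\partial\dot x/\partial x$ block at the lifted point is \emph{not} simply $J_{f_G}$ (it picks up extra terms from differentiating $\gamma_i x^{c_i} y^{\beta_i}(b_i-c_i)$ in $x$), so the Schur-complement identity needs to be written out and the cross terms shown to cancel or to contribute a nonvanishing factor. Both of these are exactly the places where Banaji's original proof does nontrivial work, so as written your outline is a plausible strategy but not yet a proof.
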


\begin{theorem}[Adding a dependent reaction, \textit{Theorem 1} in \cite{Banaji2018}] \label{thm:depb} Let $G=(\X,\C,\R)$ be a network, and let $G'$ arise from $G$ by adding to $\mathcal{R}$ a new irreversible reaction with reaction vector $y$ which is a linear combination of reaction vectors of $\mathcal{R}$. If $G$ admits nondegenerate multistationarity, then so does $G'$.
    
\end{theorem}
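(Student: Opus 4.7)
The plan is to use a perturbation argument based on the implicit function theorem, taking the new reaction rate as the small parameter. The key structural observation is that, since the new reaction vector $y$ is a linear combination of reaction vectors of $\R$, it lies in $\imG$; hence $G$ and $G'$ share the same stoichiometric subspace, the same matrix $W$ of conservation laws, and the same family of positive stoichiometric compatibility classes $N_T$. In particular, any perturbation of $G$ obtained by turning on the new reaction rate keeps the hypothesized total amount vector $T^*$ meaningful as a compatibility class for $G'$.

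By hypothesis there exist $\k^*\in \mathbb{R}^r_{>0}$, $T^*\in \mathbb{R}^d$, and distinct nondegenerate steady states $x_1^*,x_2^* \in \V(G,\k^*)\cap N_{T^*}$. I set the rates of $G'$ equal to $\k^*$ on the old reactions and to $\epsilon>0$ on the new one, and denote the resulting species formation function by $f^{G'}(\epsilon,x)$. Selecting $n-d$ components of $f^{G'}$ whose corresponding rows of $\Gamma$ span $\imG$, and combining them with the $d$ conservation equations, I obtain a square map
$$F(x,\epsilon)=\bigl(\tilde f^{G'}(\epsilon,x),\;Wx-T^*\bigr),\qquad F:\mathbb{R}^n_{>0}\times \mathbb{R}_{\ge 0}\to \mathbb{R}^n.$$
By the full-rank characterization of nondegeneracy recalled in the Preliminaries (see \cite[Section 6]{WF13}), $\partial_x F(x_i^*,0)$ is invertible for $i=1,2$.

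The implicit function theorem then produces smooth curves $\epsilon\mapsto x_i(\epsilon)$, $i=1,2$, with $x_i(0)=x_i^*$ and $F(x_i(\epsilon),\epsilon)=0$ for all sufficiently small $\epsilon\ge 0$. For such $\epsilon>0$, continuity forces $x_1(\epsilon)\neq x_2(\epsilon)$ and both curves to stay componentwise positive; by construction both satisfy $Wx_i(\epsilon)=T^*$, placing them in the same compatibility class $N_{T^*}$ of $G'$. Since invertibility of $\partial_x F$ is an open condition, nondegeneracy is preserved along each curve, so $G'$ admits nondegenerate multistationarity with rates $(\k^*,\epsilon)$ and total amounts $T^*$. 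The one technical point that requires care is choosing the independent block of entries of $f^{G'}$ so that $F$ is genuinely square and its $x$-Jacobian at $(x_i^*,0)$ agrees with the Jacobian whose full rank encodes the $G$-nondegeneracy of $x_i^*$; this is standard given \emph{loc. cit.}, but it is the only place where some bookkeeping is needed.
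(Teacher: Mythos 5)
This theorem is stated in the paper as a quotation of \cite[Theorem 1]{Banaji2018} and is not proved there, so there is no internal proof to compare against; your implicit-function-theorem perturbation argument (turning on the new rate as a small parameter $\epsilon$, using that $y\in\imG$ leaves $W$ and the compatibility classes unchanged, and invoking openness of nondegeneracy, distinctness and positivity) is correct and is essentially the argument used in the cited source. The only imprecision is the phrase ``rows of $\Gamma$ span $\imG$'': the condition you need is that the selected $n-d$ rows of $\Gamma$ have rank $n-d$ (equivalently, the coordinate projection onto the selected species is injective on $\imG$), which guarantees both that vanishing of the selected components of $f^{G'}$ forces $f^{G'}=0$ and that invertibility of $\partial_x F$ is equivalent to nondegeneracy --- exactly the bookkeeping you flag at the end.
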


\subsection{Deficiency Theory}

Chemical reaction network theory, initiated by Horn, Jackson and Feinberg beginning in the 1970s, aims at analyzing reaction networks independently of the choice of rate constants. One key area of progress is deficiency theory. Deficiency theory provides a powerful tool to determine, among other things, if a network has the capacity for multistationarity or not just from its deficiency $\delta$, a quantity that is independent of choices of labeling and of the particular kinetics, as it is a purely structural invariant of the network. 

\begin{definition}
    Let $G = (\X, \C, \R)$ be a network with $\ell$ connected components (as a graph), and let $\imG$ be its associate stoichiometric (linear) subspace. The \textit{deficiency} of $G$ is given by the non-negative integer
    \[\delta = |\mathcal{C}| - \ell - \dim(\imG).\]
\end{definition}

Computationally, we count complexes and connected components from the reaction network when seen as a graph and we compute $\dim(\imG)=|\X| - \text{rank}(W)$, from the matrix of conservation laws $W$.

Deficiency also has a geometric interpretation. For this we first need to define \textit{affinely independence}: We say that the vectors $y_0,\dots,y_N\in\mathbb{R}^M$ are affinely independent if $y_1 - y_0,\dots, y_N - y_0$ are linearly independent. Note that this is a property independent of the ordering of the vectors, and it can intuitively be thought of as that the points $y_0,\dots,y_N\in\mathbb{R}^M$ are in general position.

We have the following equivalence for $G$: $\delta = 0$ if and only if the complex vectors within each connected component $G_i$ are affinely independent, and the stoichiometric subspaces $\imG_i$ associated to each $G_i$ are linearly independent \cite{CraciunJohnston2020,CraciunJinYu2023}.

Recall the definition of a reaction network being weakly reversible (\cref{def:weakly_reversible}). The following result, due to Horn and Feinberg, provides a way to ensure monostationarity of a network using its deficiency:

\begin{theorem}[Deficiency Zero Theorem \cite{Horn1972necessary,Feinberg1972complex,Feinberg1974dynamics}]\label{thm:DefZero}
    Let $G = (\X, \C, \R)$ be a network with deficiency $\delta=0$. Then, $G$ is monostationary. Moreover,
    \begin{enumerate}
        \item If $G$ is weakly reversible, then it admits a unique positive steady state (for each choice of  reaction rates) for each stoichiometric compatibility class, and this steady state is locally asymptotically stable.
        
        \item If $G$ is \textit{not} weakly reversible, then it does not admit any positive steady states (for each choice of reaction rates).
    \end{enumerate}
\end{theorem}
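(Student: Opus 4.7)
The plan is to follow the classical Horn--Jackson--Feinberg strategy of factoring the mass-action ODE through the vector space of complexes and reducing the theorem to facts about \emph{complex-balanced} equilibria. First I would write $f(x) = Y\,A_\kappa\,\Psi(x)$, where $Y\in \mathbb{Z}^{|\X|\times|\C|}$ has the complex vectors as its columns, $A_\kappa$ is the weighted graph Laplacian of $G$ on the vertex set $\C$ with edge weights $\kappa_{y\to y'}$, and $\Psi(x)\in\mathbb{R}^{|\C|}$ has entries $x^y$. Let $S_c\subseteq\mathbb{R}^{|\C|}$ denote the span of the vectors $e_{y'}-e_y$ over $y\to y'\in\R$; then $\operatorname{im}(A_\kappa)\subseteq S_c$, $\dim S_c = |\C|-\ell$, and $Y(S_c) = \imG$.

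The crucial reduction is that, by rank--nullity applied to the restriction $Y|_{S_c}$, one has $\delta = \dim(\ker Y \cap S_c)$. Thus when $\delta=0$ any positive steady state $x^*$ satisfies $A_\kappa\Psi(x^*)\in S_c\cap \ker Y=\{0\}$, which means $x^*$ is complex balanced. The rest of the proof splits along the two cases of the statement. If $G$ is \emph{not} weakly reversible, some edge fails to lie on a directed cycle; a sign analysis of $\ker A_\kappa$ based on the decomposition into strongly connected components (terminal versus non-terminal) shows that $A_\kappa v=0$ with $v>0$ coordinatewise is impossible, so no positive complex-balanced vector exists and $\V(G,\kappa)=\varnothing$. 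If $G$ is weakly reversible, each linkage class is strongly connected and the Matrix-Tree theorem yields an explicit positive element of $\ker A_\kappa$, from which a positive complex-balanced steady state $x^*$ can be constructed by pulling back along the monomial map $\Psi$.

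For uniqueness within each stoichiometric compatibility class and for local asymptotic stability, I would invoke the Horn--Jackson Lyapunov function
\[
L(x)=\sum_{i=1}^n \bigl( x_i\ln(x_i/x_i^*) - x_i + x_i^* \bigr),
\]
which is strictly convex and nonnegative with $L(x^*)=0$, and whose time derivative along trajectories of $(G,\kappa)$ is nonpositive and vanishes exactly at complex-balanced steady states. The step I expect to be the \textbf{main obstacle} is this last uniqueness/stability argument: one must combine the strict convexity of $L$ on each compatibility class with a Birch-type toric description of the positive complex-balanced variety as the intersection of binomial equations $x^{w^+}=x^{w^-}$ for $w\in \ker Y \cap S_c^{\perp}$ with the affine slice $N_T$, and show that these two pieces (strictly log-convex and affine) meet in exactly one point. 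Monostationarity of $G$ under $\delta=0$ then follows in both cases (vacuously in the non-weakly-reversible one, from uniqueness in the weakly reversible one), completing the proof.
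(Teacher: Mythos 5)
This statement is the classical Deficiency Zero Theorem; the paper does not prove it but cites it from Horn and Feinberg, so there is no ``paper proof'' to compare against. Your outline is the standard Horn--Jackson--Feinberg argument and its skeleton is sound: the factorization $f(x)=YA_\kappa\Psi(x)$, the identity $\delta=\dim(\ker Y\cap S_c)$ via rank--nullity on $Y|_{S_c}$, the conclusion that $\delta=0$ forces every positive steady state to be complex balanced, the support argument on terminal strong linkage classes ruling out positive equilibria in the non-weakly-reversible case, and the pseudo-Helmholtz Lyapunov function $L$ for uniqueness and local asymptotic stability within a compatibility class. You also correctly identify the Birch-type intersection argument (strictly convex $L$ versus the affine slice $N_T$, with the complex-balanced locus cut out by binomials) as the technical heart of the uniqueness step.

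The one place your sketch is genuinely thinner than it should be is the \emph{existence} of a positive complex-balanced equilibrium in the weakly reversible case. The Matrix-Tree theorem gives you a strictly positive $\chi\in\ker A_\kappa$, but ``pulling back along $\Psi$'' is not automatic: you need some positive element of $\ker A_\kappa$ to lie in the image of the monomial map, i.e.\ you must solve $Y^{T}\mu=\ln\chi+\sum_i c_i\,\omega_i$ for $\mu=\ln x$ and per-linkage-class constants $c_i$ (where $\omega_i$ is the indicator of the $i$-th linkage class, and rescaling $\chi$ by $e^{c_i}$ on each class keeps it in $\ker A_\kappa$). Solvability of this linear system for arbitrary $\chi$ is equivalent to $\operatorname{im}Y^{T}+\operatorname{span}\{\omega_i\}=\mathbb{R}^{|\C|}$, whose failure is measured exactly by $\dim(\ker Y\cap S_c)=\delta$. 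So $\delta=0$ is used a second time, for existence, and omitting this makes the weakly reversible branch incomplete as written. With that point supplied, the proposal is a faithful reconstruction of the classical proof.
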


In particular, if $G$ has defiency zero, it is monostationary. We will use this theorem to prove monostationarity in \Cref{sec:n-site_monost,sec:cascade_monost}.

\begin{remark}\label[remark]{rmk:monomolecular}
    Consider a monomolecular network, that is, a network where all complexes either have one species or they are the $0$ complex. Then, by the geometric interpretation we can see that the deficiency of the network is always 0. Thus, all monomolecular networks are monostationary (if they are weakly reversible, they have one steady state, otherwise they have none).
\end{remark}

\subsection{Sequential $n$-site double phosphorylation cycle}\label{sec:n-site}
Let us present the n-site double Phosphorylation cycle, which we will denote by $\mathcal{P}^n$. It is defined as

\begin{small}
 \begin{align}\label{eq:n-site}
S_0 + E \ce{<=>} ES_0 \ce{->}  \dots \ce{<=>}  & ES_{i} \ce{->} S_{i+1}+E  \ce{<=>} \dots \ce{<=>}  ES_{n-1}  \ce{->} S_n+E \nonumber\\
S_n + F  \ce{<=>} FS_n \ce{->} \dots \ce{<=>}  & FS_{i}  \ce{->} S_{i-1}+F \ce{<=>} \dots \ce{<=>}  FS_1 \quad  \ce{->} S_0+F.  \\
\nonumber
\end{align}
\end{small}

It is sometimes convenient to highlight the role of different species in the network, and in this line we will classify species in $\mathcal{P}^n$ into three types. As $E,F$ act as catalysts in the inter-conversion between $S_i$ and $S_{i+1}$ ($0<i<n-1$), we call the species $E, F$ \textit{enzymes}, and the species $S_0, \dots, S_n$ \textit{substrates}. The species $ES_k$ for $k=0,\dots,n-1$, $FS_k$ for $k=1,\dots,n$ are \textit{intermediate species}  because they take part in necessary intermediate steps of the phosphorylation/dephosphorylation processes, acting as an intermediary between different \textit{Enzyme-substrate complexes} (\textit{complexes} consisting of one \textit{enzyme} and one \textit{substrate}).

For a fixed $n>0$, $\mathcal{P}^n$ has $3n+3$ species and $6n$ reactions. It has the following conservation laws:
    \begin{align}\label{eq:cl_n-site}
        L_E: & \quad x_E + \sum_{i=0}^{n-1} x_{ES_i} = T_E\notag \\
        L_F: & \quad x_F + \sum_{i=1}^{n} x_{FS_i} = T_F\\
        L_S: & \quad \sum_{i=0}^n x_{S_i} + \sum_{i=0}^{n-1} x_{ES_i} + \sum_{i=1}^n x_{FS_i} = T_S\notag
    \end{align}
    where $T=(T_E, T_F, T_S)$ takes values in $\mathbb{R}^3_{>0}$.

Note that this network admits multistationarity for $n\geq 2$ (\cite{WS}, see also \cite{BIHAN2020367,CIK-2site}). The network is monostationary for $n=1$ (it satisfies the conditions of the deficiency one theorem \cite{feinberg-1987}).

In this work, we are interested in analyzing the capacity for multistationarity of semi-open versions of this network for $n\geq 2$. According to \cite{entrapped}, when opening all species the network always admits multistationarity (as the closed network admits multistationarity).

The results in \cite{cappelletti} allow us to know that the number of nondegenerate steady states is preserved in $\mathcal{P}^2$ when we add outflows to all species of this network, and inflows to certain subsets of them.

\medskip

We take $\mathcal{P}^2$, which is the 2-site double phosphorylation cycle, and analyze $\mathcal{P}^2_{\E\rightleftharpoons 0}$ for different sets $\E$ of substrates and enzymes. The table below, generated using the Chemical Reaction Network Toolbox \cite{crn_toolbox}, shows which resulting networks admit nondegenerate multistationarity (Multist.). 

\begin{table}[H]
\begin{tabular}{|c|c|c|}
\hline
Index & Species set $\E$ & Multist. \\ \hline
1 & E or F          & Yes                                        \\ \hline
2 & E and F      & No                                     \\ \hline
3 & $S_0$ or $S_1$          & Yes                                        \\ \hline
4 & $S_0$ and $S_1$         & Yes                                        \\ \hline
5 & $S_0,S_1,S_2$     & Yes                                        \\ \hline
6 & E, F, $S_0$     & No                                     \\ \hline
7 & E, $S_0$        & Yes                                        \\ \hline
8 & E, $S_1$        & No                                     \\ \hline

\end{tabular}
\end{table}
\medskip
Even when intermediate species are excluded, varied behavior is observed for different combinations of enzymes and substrates being open. In this paper, we give theorems that encompass networks 1-6, proving this behavior of $\mathcal{P}^n$ for all $n\geq 2$. 

\medskip

\section{Substrate‐Opening in the $n$‐Site Cycle}\label{sec:substrates}

\subsection{Overview and Main Statement}

As explained in \Cref{sec:n-site}, it is known that the $2$-site double phosphorylation cycle, which we will call $\mathcal{P}^{2}$ here, admits nondegenerate multistationarity. In this section, we shall prove that for any subset $\E$ of substrate species, the corresponding open network also does and that this is also the case for $\mathcal{P}^{n}$, for any $n\geq 2$:

\begin{theorem}\label{thm:main_multist}
Consider $n\ge2$ and the network $\mathcal{P}^{n}_{\mathcal{S}\rightleftharpoons0}$ obtained from $\mathcal{P}^{n}$ by opening an arbitrary subset of substrates $\mathcal{S}\subseteq \{S_0,\dots,S_n\}$. Then $\mathcal{P}^{n}_{\mathcal{S}\rightleftharpoons0}$  admits nondegenerate multistationarity.
\end{theorem}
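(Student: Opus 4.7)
The plan is to split the argument into two stages: a reduction from arbitrary $\mathcal{S}$ to a single opened substrate using \Cref{thm:depb}, and a base case establishing nondegenerate multistationarity when exactly one substrate is opened.

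For the reduction, I would fix some $S_{j_0}\in\mathcal{S}$ and assume $\mathcal{P}^n_{\{S_{j_0}\}\rightleftharpoons 0}$ admits nondegenerate multistationarity. For any other $S_j\in\mathcal{S}\setminus\{S_{j_0}\}$, the reaction vector of $0\to S_j$ equals the reaction vector of $0\to S_{j_0}$ plus the correction $e_{S_j}-e_{S_{j_0}}$. This correction is annihilated by the three conservation laws $L_E,L_F,L_S$ of $\mathcal{P}^n$ (both substrates have coefficient zero in $L_E,L_F$ and coefficient one in $L_S$), so it lies in $\imG(\mathcal{P}^n)$; and the $0\to S_{j_0}$ vector lies in the stoichiometric subspace of $\mathcal{P}^n_{\{S_{j_0}\}\rightleftharpoons 0}$ by construction. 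Hence the new vector is dependent and \Cref{thm:depb} lets me add $0\to S_j$ while preserving nondegenerate multistationarity; a second application adds $S_j\to 0$, whose vector is the negative of the one just added. Iterating over $\mathcal{S}\setminus\{S_{j_0}\}$ yields nondegenerate multistationarity for $\mathcal{P}^n_{\mathcal{S}\rightleftharpoons 0}$.

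For the base case, the key observation is obtained by contracting the steady-state equations of $\mathcal{P}^n_{\{S_{j_0}\}\rightleftharpoons 0}$ with the old conservation vector $L_S$: the identity collapses to $\kappa^{\mathrm{in}}-\kappa^{\mathrm{out}}x_{S_{j_0}}=0$, so $x_{S_{j_0}}$ is pinned to $\kappa^{\mathrm{in}}/\kappa^{\mathrm{out}}$ at every positive steady state. The remaining steady-state equations are precisely those of $\mathcal{P}^n$, together with the conservations $L_E=T_E$ and $L_F=T_F$. Thus nondegenerate multistationarity of the semi-open network is equivalent to the existence of $\mathcal{P}^n$-rate constants and two distinct nondegenerate positive steady states $x^{*},x^{**}$ of $\mathcal{P}^n$ sharing common $L_E,L_F$ levels and a common value $x^{*}_{S_{j_0}}=x^{**}_{S_{j_0}}$. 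I would establish this by induction on $n$, with base $n=2$ covered by the table and Cappelletti's nondegeneracy results, and the inductive step $n\rightsquigarrow n+1$ done by inserting a new substrate into the phosphorylation chain of $\mathcal{P}^n$: I would apply \Cref{thm:interb} to replace the end-of-chain transfer reactions by two-step chains whose intermediates are the newly introduced substrate and enzyme-substrate complex, and then apply \Cref{thm:depb} to add the reverse reactions as dependent reactions, all while keeping $S_{j_0}$ semi-open throughout.

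The main obstacle will be executing the inductive step cleanly. The new reactions of $\mathcal{P}^{n+1}$ involve genuinely new species that do not appear in $\mathcal{P}^n$, so they cannot be added as purely dependent reactions, and \Cref{thm:interb} must be invoked several times with carefully chosen intermediates. The delicate verifications are (i) that the coefficient matrix $\beta$ of the new species has full rank in each application, and (ii) that the choreography of replacements on the $E$ and $F$ sides produces exactly the reactions of $\mathcal{P}^{n+1}_{\{S_{j_0}\}\rightleftharpoons 0}$ without extraneous reactions that would take us outside the phosphorylation-cycle family. If this choreography proves too rigid, the fall-back is to construct the two witness steady states of $\mathcal{P}^n$ directly from the rational parametrization derived from the intermediate quasi-steady-state relations and the chain identities $A_i x_{S_i} x_E = B_{i+1} x_{S_{i+1}} x_F$ for $i=0,\dots,n-1$, tuning the rate constants so that the two steady states coincide at the $S_{j_0}$ coordinate.
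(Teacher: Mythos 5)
Your first stage (reducing an arbitrary open set $\mathcal{S}$ to a single open substrate by adding each $0\rightleftharpoons S_j$ as a dependent reaction via \Cref{thm:depb}) is sound, and is a legitimate alternative to the paper's route, which instead observes that opening further substrates does not change the stoichiometric subspace and invokes \Cref{thm:enlarging_preserving_stoichiometry}. Your overall architecture (numerical base case at $n=2$ plus induction on $n$ using the inheritance results of Banaji--Pantea) is also the paper's.

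The genuine gap is in the inductive step. \Cref{thm:interb} only lets new species enter through the \emph{middle} complex of a chain $a_i.X\to c_i.X+\beta_i.Y\to b_i.X$ that replaces an \emph{existing} reaction $a_i.X\to b_i.X$ whose endpoints involve only old species. In $\mathcal{P}^{n+1}$ the new substrate $S_{n+1}$ appears in the source/product complexes $S_{n+1}+E$ and $S_{n+1}+F$, and the new reactions $ES_n\rightleftharpoons S_{n+1}+E$ and $S_{n+1}+F\rightleftharpoons FS_{n+1}$ join two complexes that both contain new species; moreover $\mathcal{P}^n_{S_i\rightleftharpoons 0}$ has no reaction with source $S_n+E$ that could be replaced to create the chain $S_n+E\to ES_n\to S_{n+1}+E$. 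So the choreography you describe (introducing ``the newly introduced substrate and enzyme--substrate complex'' as intermediates) cannot be realized by \Cref{thm:interb} and \Cref{thm:depb} alone, and your fallback (a direct rational parametrization producing two nondegenerate steady states of $\mathcal{P}^n$ with prescribed common $S_{j_0}$-value) is left entirely undeveloped. This is exactly the point where the paper does genuine work: it first builds an auxiliary network $\bar G$ by adding the two transfer reactions $S_n+E\to S_{n+1}+E$ and $S_{n+1}+F\to S_n+F$ with a \emph{common} rate $a$, shows by an explicit computation that every positive steady state $x'$ of $\mathcal{P}^n_{S_i\rightleftharpoons 0}$ lifts to one of $\bar G$ via $\bar x'_{S_{n+1}}=x'_{S_n}x'_E/x'_F$ (\Cref{lemma:n+1_steadystate}), and verifies nondegeneracy of the lift by a block analysis of the Jacobian (\Cref{lemma:n+1_nondeg}); only then do \Cref{thm:interb} (inserting $ES_n$ and $FS_{n+1}$ as intermediates of those two reactions) and \Cref{thm:depb} (adding their reverses) deliver $\mathcal{P}^{n+1}_{S_i\rightleftharpoons 0}$ in \Cref{thm:induction}. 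Two smaller issues: your claimed \emph{equivalence} between nondegenerate multistationarity of $\mathcal{P}^n_{\{S_{j_0}\}\rightleftharpoons 0}$ and the existence of two nondegenerate steady states of the closed $\mathcal{P}^n$ with matching $S_{j_0}$-coordinate transfers the steady-state sets correctly, but nondegeneracy is measured against different Jacobians and different stoichiometric subspaces, so that transfer needs an argument you do not give; and an induction that keeps the open index fixed never reaches indices $j_0\geq 3$ (in particular the top substrate) unless you also use the relabeling symmetry $\mathcal{P}^n_{S_i\rightleftharpoons 0}\sim\mathcal{P}^n_{S_{n-i}\rightleftharpoons 0}$ of \Cref{rem:symmetry}, which your plan omits.
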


This result follows by induction on $n$, provided by the following ingredients: \Cref{prop:base_case} gives the base case for the induction, $n=2$. \Cref{thm:induction} states that if $\mathcal{P}^{n}$ with a certain substrate $S_i\in \{S_1,\ldots S_n\}$ open admits nondegenerate multistationarity, then $\mathcal{P}^{n+1}$ also does with the same substrate open. For $\mathcal{P}^{n+1}_{S_{n+1}\rightleftharpoons 0}$, we will need \Cref{rem:symmetry}, which shows that if $\mathcal{P}^{n}_{S_{i}\rightleftharpoons 0}$ has nondegenerate multistationarity, so does $\mathcal{P}^{n}_{S_{n-i}\rightleftharpoons 0}$. The fact that this can be extended to opening any subset $\mathcal{S}$ follows from the fact that, once a substrate is open, opening any other substrate does not modify the stoichiometric subspace, together with \Cref{thm:enlarging_preserving_stoichiometry}.  

\Cref{sec:thm_multist} will be devoted to \Cref{thm:induction} and the technical lemmas needed for its proof. In the rest of this section we present the remaining results mentioned above.

\begin{proposition}\label[proposition]{prop:base_case}The $2$-site double phosphorylation cycle with any of the substrates open, $\mathcal{P}^2_{S\rightleftharpoons 0}$, $S\in\{S_0,S_1,S_2\}$ admits nondegenerate multistationarity.    
\end{proposition}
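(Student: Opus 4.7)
The plan is to handle the three substrate-opening cases by first reducing via an evident symmetry of the network and then exhibiting explicit parameter witnesses for the two inequivalent cases that remain.

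First, observe that $\mathcal{P}^n$ admits an E-graph involution $\sigma$ given by $E \leftrightarrow F$, $S_i \leftrightarrow S_{n-i}$, and $ES_i \leftrightarrow FS_{n-i}$, which swaps the phosphorylation and dephosphorylation half-cycles reaction by reaction. For $n=2$, $\sigma$ exchanges $S_0$ and $S_2$ and fixes $S_1$, so it induces an isomorphism of labeled reaction networks between $\mathcal{P}^2_{S_0 \rightleftharpoons 0}$ and $\mathcal{P}^2_{S_2 \rightleftharpoons 0}$ under which nondegenerate positive steady states in a common stoichiometric compatibility class transport to nondegenerate positive steady states on the other side, after relabeling species and rate constants (this is \Cref{rem:symmetry} applied at the base). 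Hence it suffices to treat the two inequivalent cases $\E = \{S_0\}$ and $\E = \{S_1\}$.

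For each remaining case I would follow the standard rational parametrization of positive steady states of distributive sequential phosphorylation cycles from \cite{WS}: the local equilibria $\dot{x}_{ES_i} = 0$ and $\dot{x}_{FS_j} = 0$ solve each intermediate as a rational function of $(x_E, x_F, x_{S_k})$; substituting these into the remaining steady state equations, the conservation laws $L_E$ and $L_F$, and the flow-balance equation for $\dot{x}_S = 0$ that replaces $L_S$ after opening $S$, reduces the full system to a univariate polynomial in one free parameter. I would select rate constants $\k^\ast$ and an inflow/outflow pair $(k_{\mathrm{in}}, k_{\mathrm{out}})$ so that this polynomial has at least two positive roots whose lifts to the full species vector are strictly positive, yielding at least two positive steady states in a common compatibility class. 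Nondegeneracy at each witness $x^\ast$ is then certified by evaluating the $9 \times 9$ augmented Jacobian of the square system $\{f(\k^\ast, x) = 0,\; W x = T^\ast\}$ and checking that its determinant is nonzero; by semi-algebraic openness of the nondegenerate multistationary locus in $(\k, T)$, the witness may be chosen with rational entries and the check becomes a finite exact arithmetic computation.

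The main obstacle is algebraic, not conceptual: the case $\E = \{S_1\}$ is fixed by $\sigma$ and therefore admits no further symmetry simplification, so the relevant univariate polynomial has comparatively large degree and its discriminant, which governs the multistationary region, is symbolically heavy. A practical route, consistent with the numerical evidence recorded in the table of \Cref{sec:n-site}, is to locate a witness using the Chemical Reaction Network Toolbox (\cite{crn_toolbox}), round it to rational data, and finish by the exact Jacobian evaluations described above.
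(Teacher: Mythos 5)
Your approach matches the paper's: the paper likewise reduces the $S_2$ case to the $S_0$ case via the relabeling symmetry of \Cref{rem:symmetry}, and then establishes the $S_0$ and $S_1$ cases by exhibiting explicit rate constants and two nondegenerate positive steady states found with the Chemical Reaction Network Toolbox (\Cref{sec:induction_base}). Your additional discussion of a rational steady-state parametrization is a reasonable elaboration, but the operative step you end with --- a Toolbox-generated witness certified by a Jacobian rank check --- is exactly what the paper does.
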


The proof for $\mathcal{P}^{2}_{S_{1}\rightleftharpoons 0}$ and $\mathcal{P}^{2}_{S_{0}\rightleftharpoons 0}$ having nondegenerate multistationarity is given in \Cref{sec:induction_base}. To extend this for $\mathcal{P}^{2}_{S_{2}\rightleftharpoons 0}$, note that $\mathcal{P}^{n}_{S_{i}\rightleftharpoons 0}$ is \textit{dynamically equivalent} to $\mathcal{P}^{n}_{S_{n-i}\rightleftharpoons 0}$ by symmetry (see \Cref{rem:symmetry} below), that is, they \textit{both have the same right-hand side of the mass action differential equations with a swap in labels} \cite{johnston-2011}. This way we can explain the dynamical behavior of $\mathcal{P}^{n+1}_{S_{n+1}\rightleftharpoons 0}$ via that of $\mathcal{P}^{n+1}_{S_{0}\rightleftharpoons 0}$. This result will also be useful to extend the inductive step to $\mathcal{P}^{n+1}_{S_{n+1}\rightleftharpoons 0}$. 

\begin{remark}\label[remark]{rem:symmetry} 
  $\mathcal{P}^{n}_{S_{i}\rightleftharpoons 0}$ is dynamically equivalent to $\mathcal{P}^{n}_{S_{n-i}\rightleftharpoons 0}$ up to species relabeling.\\
From $\mathcal{P}^{n}_{S_{i}\rightleftharpoons 0}$ one can rename species as follows: exchange substrate indices to denote the number of unphosphorylated sites, so that $S_i$ becomes $S_{n-i}$, then exchange labels of $E$ and $F$, and finally exchange the names of intermediate species accordingly ($ES_i$ becomes $FS_{n-i}$). The resulting network looks like\\
\begin{small}
 \begin{align}\label{eq:n-site_op_r}
S_n + F  \ce{<=>} FS_n \ce{->} \dots \ce{<=>}  & FS_{i}  \ce{->} S_{i-1}+F \ce{<=>} \dots \ce{<=>}  FS_1 \quad  \ce{->} S_0+F \nonumber  \\
S_0 + E \ce{<=>} ES_0 \ce{->}  \dots \ce{<=>}  & ES_{i} \ce{->} S_{i+1}+E  \ce{<=>} \dots \ce{<=>}  ES_{n-1}  \ce{->} S_n+E \\
S_{n-i} \ce{<=>}0. \nonumber
\end{align}
\end{small}

which is exactly the network $\mathcal{P}^{n}_{S_{n-i}\rightleftharpoons 0}$

\end{remark}

Once one of the substrates of $\mathcal{P}^2$ is open the new network has only $L_E$ and $L_F$ from \eqref{eq:cl_n-site}. Now opening any of the other substrates does not alter these conservation laws and therefore does not change the stoichiometric subspace. This will be the case for any $n$. As a consequence, the following result shows that by opening any other substrate, nondegenerate multistationarity is preserved:

\begin{theorem}\label{thm:enlarging_preserving_stoichiometry}\cite[Theorem 3.1]{joshi-shiu-II}    Let $G$ be a network, and let $\widetilde{G}$ be a subnetwork of $G$ such that $G$ and $\widetilde{G}$ share the same stoichiometric subspace. If $\widetilde{G}$ admits nondegenerate multistationarity with mass action, so does $G$. Moreover, $G$ admits at least as many nondegenerate steady states in some stoichiometric compatibility class as $\widetilde{G}$ does.
\end{theorem}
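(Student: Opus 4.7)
The plan is to prove \Cref{thm:enlarging_preserving_stoichiometry} by a perturbation argument based on the implicit function theorem: I would endow the extra reactions of $G$ that lie outside $\widetilde{G}$ with infinitesimally small rates, so that the mass action vector field of $G$ degenerates to that of $\widetilde{G}$ in the limit, and then track each nondegenerate steady state of $\widetilde{G}$ under the perturbation. Because $\imG = \imG_{\widetilde{G}}$, both networks share the same conservation-law matrix $W$ and the same compatibility classes $N_T$, so ``in the same stoichiometric compatibility class'' means literally the same thing on both sides.

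Concretely, fix rates $\widetilde{\kappa}^*$ on $\widetilde{G}$ and a total-amount vector $T^*$ such that $(\widetilde{G}, \widetilde{\kappa}^*)$ admits $m\geq 2$ nondegenerate positive steady states $x^1,\ldots ,x^m\in N_{T^*}$. Define rates $\kappa(\epsilon)$ on $G$ by setting $\kappa(\epsilon)_{y\to y'} = \widetilde{\kappa}^*_{y\to y'}$ for reactions in $\widetilde{G}$ and $\kappa(\epsilon)_{y\to y'} = \epsilon\, c_{y\to y'}$, with arbitrary constants $c_{y\to y'}>0$, for reactions in $G\setminus \widetilde{G}$. Then the species-formation function satisfies
\[
f_G(\kappa(0), x) \;=\; f_{\widetilde{G}}(\widetilde{\kappa}^*, x).
\]
I would form the square system $F(x,\epsilon) = \bigl(f_G^{(s)}(\kappa(\epsilon),x),\, Wx - T^*\bigr)$ of size $n$, where $f_G^{(s)}$ consists of $s = \dim(\imG)$ coordinates of $f_G$ chosen so that the remaining coordinates are recovered from them via the conservation laws. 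By the characterization of nondegeneracy in \cite[Section~6]{WF13} recalled in the excerpt, $D_x F(x^i, 0)$ is nonsingular for each $i$.

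The implicit function theorem then produces, for each $i = 1,\ldots ,m$, a smooth curve $\epsilon \mapsto x^i(\epsilon)$ with $F(x^i(\epsilon), \epsilon) = 0$ and $x^i(0) = x^i$. For $\epsilon > 0$ sufficiently small the $x^i(\epsilon)$ remain positive, pairwise distinct and nondegenerate, and they satisfy $Wx^i(\epsilon) = T^*$, hence lie in the same compatibility class $N_{T^*}$. Therefore $(G, \kappa(\epsilon))$ admits at least $m$ nondegenerate positive steady states in $N_{T^*}$, proving both the existence statement and the counting statement.

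The only subtle point, which I expect to be the main obstacle, is the careful setup of the square system: one must confirm that $W$ computed from $\imG$ genuinely serves as the conservation-law matrix for both networks simultaneously (this is exactly where the hypothesis $\imG = \imG_{\widetilde{G}}$ is essential), and that the coordinates of $f_G$ discarded in passing to $f_G^{(s)}$ are truly redundant given the retained ones together with the equations $Wx = T^*$. Once this bookkeeping is in place, the implicit function theorem and the openness of nondegeneracy under small perturbations carry the argument through without further calculation.
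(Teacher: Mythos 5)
Your argument is correct and is essentially the proof that Joshi and Shiu give for their Theorem 3.1: the paper only cites this result rather than reproving it, and the cited proof proceeds exactly as you describe, by scaling the rates of the reactions in $G\setminus\widetilde{G}$ by $\epsilon$ and continuing each nondegenerate steady state via the implicit function theorem applied to the square system built from $\dim(\imG)$ species-formation equations together with $Wx=T^*$. The hypothesis $\imG=\imG_{\widetilde{G}}$ enters precisely where you flag it, guaranteeing a common conservation-law matrix $W$ and hence identical compatibility classes for both networks.
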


\subsection{Inductive Construction of Steady States}\label{sec:thm_multist}
In what follows, we will denote by $x_X$ the concentration of species $X$, and by $f_X$ the corresponding species formation function. 

The following technical Lemmas will be necessary for the proof of  \Cref{thm:induction}:

\begin{lemma}\label[lemma]{lemma:n+1_steadystate}
    Let $i\in\{0,\ldots ,n\}$, and let $G=\mathcal{P}^n_{S_i\rightleftharpoons 0}$ be the $n$-site double phosphorylation cycle, with sets of species $\X$, as in \eqref{eq:n-site}, with the two additional reactions $S_i \rightleftharpoons 0$. Let $\bar{G}$ be the chemical reaction network which is an extension of $G$ as follows
    
\begin{scriptsize}\label{eq:n-site_ext1}
 \begin{align}
S_0 + E \ce{<=>} ES_0 \ce{->}  \dots \ce{<=>} &ES_{i} \ce{->} S_{i+1}+E \ce{<=>}  \dots \ce{<=>}  ES_{n-1} \ce{->} S_n+E \ce{->} S_{n+1}+E\notag \\
S_{n+1}+F \ce{->} S_n + F  \ce{<=>} FS_n \ce{->} \dots \ce{<=>} &FS_{i} \ce{->} S_{i-1}+F \ce{<=>}  \dots \ce{<=>}  FS_1 \quad \ce{->} S_0+F\notag\\
S_i \ce{<=>}0,
\end{align}
\end{scriptsize}
    
    If $x'$ is a positive steady state of the mass action system $(G,\k^*)$, then $\bar{x}'$ defined as
\begin{align}\label{eq:new_ss}
    \bar{x}'_{S_{n+1}}&=\frac{x'_{S_n}x'_E}{x'_F} \\
    \bar{x}'_X&=x'_X  \quad \text{ for any $X\in\mathcal{X}$}\notag
\end{align}
is a positive steady state of $(\bar{G},\bar{\k}^*)$, where $\bar{\k}^*_{y\rightarrow y'}=\k^*_{y\rightarrow y'}$ for any reaction which was already in $G$ and $a\in\mathbb{R}_{>0}$ appears as the reaction rate in
 \begin{align*}
 S_n+E  \ce{->[a]} S_{n+1}+E \\
S_{n+1}+F \ce{->[a]} S_n+F .
\end{align*}

Moreover, if $x'$ lies in the stoichiometric class with total amounts $T=(T_E,T_F)$, then $\bar{x}'$ lies in the stoichiometric class of $G'$ with total amounts $\bar{T}=(T_E,T_F)$ 
\end{lemma}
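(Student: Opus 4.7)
The plan is a direct verification that the extended concentration vector $\bar{x}'$ satisfies both claims of the lemma: it is a positive steady state of the mass action system $(\bar{G}, \bar{\k}^*)$, and it lies in the stoichiometric compatibility class with total amounts $\bar{T} = (T_E, T_F)$. No structural or deficiency-theoretic input is needed; the argument is entirely algebraic, and the only creative ingredient is the precise formula chosen for $\bar{x}'_{S_{n+1}}$.

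First I would isolate the difference between $G$ and $\bar{G}$. The only reactions in $\bar{G}$ absent from $G$ are $S_n + E \to S_{n+1} + E$ and $S_{n+1} + F \to S_n + F$, both at rate $a$, and they introduce the single new species $S_{n+1}$. Since $E$ appears on both sides of the first new reaction and $F$ on both sides of the second, the corresponding reaction vectors have zero entries for $E$ and $F$; moreover they have zero entries for any species other than $S_n$ and $S_{n+1}$. Consequently, for every $X \in \X$ with $X \neq S_n$, the extended species formation function satisfies $\bar{f}_X(\bar{x}') = f_X(x') = 0$, using that $x'$ is a steady state of $(G, \k^*)$ and that $\bar{x}'$ agrees with $x'$ on all old species.

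It remains to check the two nontrivial equations. For $S_n$, the two new reactions contribute $-a\, x'_{S_n} x'_E + a\, \bar{x}'_{S_{n+1}}\, x'_F$ to $f_{S_n}$; substituting $\bar{x}'_{S_{n+1}} = x'_{S_n} x'_E / x'_F$ makes this correction vanish, so $\bar{f}_{S_n}(\bar{x}') = f_{S_n}(x') = 0$. For $S_{n+1}$, which appears only in the two new reactions, the species formation function equals $a\, x'_{S_n} x'_E - a\, \bar{x}'_{S_{n+1}}\, x'_F$, which is zero by the same substitution. This establishes that $\bar{x}'$ is a positive steady state of $(\bar{G}, \bar{\k}^*)$.

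For the stoichiometric class statement I would observe that the conservation laws $L_E$ and $L_F$ from \eqref{eq:cl_n-site} involve neither $S_n$ nor $S_{n+1}$, and the reaction vectors of the two new reactions are supported on $\{S_n, S_{n+1}\}$. Hence $L_E$ and $L_F$ remain conservation laws of $\bar{G}$, and since $\bar{x}'$ agrees with $x'$ on every species appearing in them, the total amounts are $T_E$ and $T_F$. The one genuinely nonroutine point is recognizing that the identical rate $a$ on both new reactions is precisely what permits a single value of $\bar{x}'_{S_{n+1}}$ to both cancel the perturbation to $f_{S_n}$ and annihilate $f_{S_{n+1}}$; once that symmetry is spotted, the remainder is bookkeeping.
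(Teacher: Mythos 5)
Your proof is correct and follows essentially the same route as the paper's: identify that the two new reactions only perturb the species formation functions of $S_n$ and $S_{n+1}$ (since $E$ and $F$ cancel across each new reaction), check that the choice $\bar{x}'_{S_{n+1}} = x'_{S_n}x'_E/x'_F$ annihilates both perturbations simultaneously, and note that $L_E$ and $L_F$ are unaffected so the total amounts persist. Nothing is missing.
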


\begin{proof}
Let $\frac{dx}{dt}=f(\k,x)$ be the mass action ODE system for $G$ and let $\frac{d\bar{x}}{dt}=g(\bar{\k},\bar{x})$ be the mass action ODE system for $\bar{G}$.
Due to the nature and the rate values of the reactions added, we get:
\begin{align*}
    g_{S_{n+1}}(\bar{\k}^*,\bar{x})= &a\bar{x}_{S_n}\bar{x}_E-a\bar{x}_{S_{n+1}}\bar{x}_F\\
    g_{S_n}(\bar{\k}^*,\bar{x})= &f_{S_n}(\k^*,\bar{x})-a\bar{x}_{S_n}\bar{x}_E+a\bar{x}_{S_{n+1}}\bar{x}_F\\
    g_X(\bar{\k}^*,\bar{x})= &f_X(\k^*,\bar{x}) \text{    (for any $X \in \mathcal{X}\setminus\{S_n\}$)}
\end{align*}

Now the first part of the statement is straightforward after noting that for this $\bar{x}'$, $f(\k^*,\bar{x}')=0$ trivially, and substituting the values of ${x}'_{S_{n+1}}$ makes the additional terms vanish, yielding $g(\bar{\k}^*,\bar{x}')=0$.

The conservation laws do not change or add any new species,
\begin{align*}
    \bar{T_E}=& \sum_{i=0}^{n-1}\bar{x}'_{ES_i}+\bar{x}'_E =\sum_{i=0}^{n-1}{x'}_{ES_i}+{x'}_E =T_E\\
    \bar{T_F}=& \sum_{i=1}^{n}\bar{x}'_{FS_i}+\bar{x}'_F=\sum_{i=1}^{n}{x'}_{FS_i}+{x'}_F=T_F,
\end{align*}
\end{proof}

\begin{lemma}\label[lemma]{lemma:n+1_nondeg}
   Let $i\in\{0,\ldots ,n\}$, and let $G=\mathcal{P}^n_{S_i\rightleftharpoons 0}$ and $\bar{G}$ as in \eqref{eq:n-site_ext1}. 
   If $x'$ is a nondegenerate positive steady state of $(G,\k^*)$, then $\bar{x}'$ defined as in \Cref{lemma:n+1_steadystate} is a nondegenerate positive steady state of $(\bar{G},\bar{\k}^*)$, where $\bar{\k}^*$ is as in \eqref{eq:new_ss}.
\end{lemma}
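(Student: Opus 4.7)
The plan is to apply the kernel characterization of nondegeneracy: a positive steady state $z^*$ is nondegenerate if and only if every vector in the stoichiometric subspace annihilated by the Jacobian at $z^*$ must itself be zero. First I would verify that the conservation laws of $\bar{G}$ are exactly those of $G$, namely $L_E$ and $L_F$. Both new reactions $S_n+E\to S_{n+1}+E$ and $S_{n+1}+F\to S_n+F$ preserve the enzyme totals, and a rank count shows that the new reaction vector $e_{S_{n+1}}-e_{S_n}$ enlarges $\dim\text{im}\,\Gamma$ by exactly one, matching the added species $S_{n+1}$; hence the space of conservation laws of $\bar{G}$ has the same dimension as that of $G$, and is spanned by $L_E,L_F$ (with zero coefficient at $S_{n+1}$).

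The key computational observation is that the two new reactions only modify $g_{S_n}$ and $g_{S_{n+1}}$, with equal and opposite extra terms, so that $g_{S_n}+g_{S_{n+1}}=f_{S_n}$. Armed with this, I would take $\bar{v}\in\text{im}\,\bar{\Gamma}$ with $J_g(\bar{x}')\bar{v}=0$ and let $v$ denote its restriction to the coordinates of $\X$. Since $L_E,L_F$ have zero entry at $S_{n+1}$, the constraint $\bar{W}\bar{v}=0$ forces $Wv=0$, so $v\in\text{im}\,\Gamma$. For every $X\in\X\setminus\{S_n\}$ one has $g_X=f_X$ and $f_X$ does not depend on $x_{S_{n+1}}$, which gives $(J_f(x')v)_X=(J_g(\bar{x}')\bar{v})_X=0$. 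Adding the $S_n$ and $S_{n+1}$ rows of $J_g(\bar{x}')\bar{v}=0$ and using $g_{S_n}+g_{S_{n+1}}=f_{S_n}$ likewise yields $(J_f(x')v)_{S_n}=0$. Hence $J_f(x')v=0$ with $v\in\text{im}\,\Gamma$, and nondegeneracy of $x'$ in $G$ forces $v=0$.

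To finish I would read off the $S_{n+1}$-row of $J_g(\bar{x}')\bar{v}=0$, which is
\begin{equation*}
a\bar{x}'_E\,v_{S_n} + a\bar{x}'_{S_n}\,v_E - a\bar{x}'_F\,\bar{v}_{S_{n+1}} - a\bar{x}'_{S_{n+1}}\,v_F = 0,
\end{equation*}
and substitute $v=0$: the equation collapses to $-a\bar{x}'_F\,\bar{v}_{S_{n+1}}=0$, and since $\bar{x}'_F,a>0$ we conclude $\bar{v}_{S_{n+1}}=0$, hence $\bar{v}=0$. The only delicate step is the bookkeeping that identifies which Jacobian entries are touched by the two added reactions and confirms that the cancellation $g_{S_n}+g_{S_{n+1}}=f_{S_n}$ precisely eliminates the unwanted entries in the $\bar{x}_{S_{n+1}}$-column; once that is in place, the reduction to the nondegeneracy of $x'$ in $G$ is immediate.
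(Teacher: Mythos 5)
Your argument is essentially the paper's: both decompose the Jacobian of $\bar{G}$ so that the rows for $S_n$ and $S_{n+1}$ cancel the newly added terms when summed, show that the truncation of a kernel vector lies in $\ker J_f(x')\cap\imG$, and then invoke the nondegeneracy of $x'$ in $G$. If anything, your write-up is slightly more complete than the paper's, since you explicitly verify that the conservation laws of $\bar{G}$ are still spanned by $L_E,L_F$ (so the truncated vector really lies in the stoichiometric subspace of $G$) and you close the argument by extracting $\bar{v}_{S_{n+1}}=0$ from the $S_{n+1}$-row, a final step the paper leaves implicit.
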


\begin{proof}If $x'$ is nondegenerate, it means the right kernel of the Jacobian matrix has no intersection with the stoichiometric subspace defined by the conservation laws. We show that this implies that the right kernel of the Jacobian matrix of $\bar{G}$ at $\bar{x}'$ also has no intersection with the stoichiometric subspace of that system.\\
 Let $J(x',\kappa^*)$ be the Jacobian of $G$ at $x'$, with entries 
\begin{equation}
J(x',\kappa^*)_{ij}=\frac{\partial f_{X^{G}_i}}{\partial x_{X^{G}_j}}\Big|_{x'} \nonumber    
\end{equation}
for some ordering on the species of $G$, $\mathcal{X}_{G}=\{X^{G}_1,X^{G}_2\dots X^{G}_{3n+3}\}$. Without loss of generality, we may assume that $X^{G}_1=S_n$,$X^{G}_2=E$ and $X^{G}_3=F$.\\
\medskip\\
Consider now the Jacobian matrix for $\bar{G}$ at $\bar{x}'$, $J(\bar{x}',\bar{\k} ^*)$, with entries
\begin{equation}
\bar{J}(\bar{x}',\bar{\kappa}^*)_{ij}=\frac{\partial g_{X^{\bar{G}}_i}}{\partial x_{X^{\bar{G}}_j}}\Big|_{\bar{x}'} \nonumber    
\end{equation}
for some ordering on the species of $\bar{G}$, $\mathcal{X'}=\{X^{G'}_1,X^{G'}_2\dots X^{G'}_{3n+4}\}$. Let the ordering be such that $X^{G'}_i=X^{G}_i$ for $i\leq3n+3$, and $X^{G'}_{3n+4}=S_{n+1}$.\\
By the structure of $g$ and it's dependence on $f$, we can write $\bar{J}$ as\\
$$
\Bar{J}=\left[ 
\begin{array}{c|c} 
  J_{(3n+3)\times(3n+3)}+A_{(3n+3)\times(3n+3)} & B_{(3n+3)\times1} \\ 
  \hline 
  C_{1\times(3n+3)} & D_{1\times1} 
\end{array} 
\right]
$$
With $J$ as defined before for $G$ and
$$
A=\left[ 
\begin{array}{cccc} 
 -ax_E & -ax_{S_n} & ax_{S_{n+1}} & \dots \\
 0 & 0 & 0 & \dots \\ 
 0 & 0 & 0 & \dots \\
 \vdots & \vdots& \vdots& \bigzero
\end{array} 
\right]
$$
$$
C=\left[ 
\begin{array}{cccc} 
  ax_E & ax_{S_n} & -ax_{S_{n+1}} & \dots
 \end{array} 
\right]
$$
$$
B=\left[ 
\begin{array}{c} 
 ax_F   \\
 0   \\ 
 0   \\
 \vdots\\
\end{array} 
\right]
$$

$$
D=\left[ 
\begin{array}{c} 
-ax_F
 \end{array} 
\right]
$$
With all the non-specified entries being 0\\

\noindent Take a vector in the right kernel of $\bar{J}$, $\textbf{c}$, of $3n+4$ elements s.t. $\bar{J}.\textbf{c}=0$

The computations for the last row (using only $C$ and $D$) give 
\begin{equation}
    ax_E c_1+ax_{S_n} c_2-ax_{S_{n+1}}c_3-ax_Fc_{3n+4}=0 
\end{equation}
    
\noindent Let $\textbf{c'}$ be a vector made up of the first 3n+3 elements of vector $\textbf{c}$ and $J_{j.}$ denote the $j^{th}$ row of $J$. For the first row we get
\begin{eqnarray}
    J_{1.}.\textbf{c'}-ax_E c_1-ax_{S_n} c_2+ax_{S_{n+1}}c_3+ax_Fc_{3n+4}=0  
\end{eqnarray}
And $J_{j.}\textbf{c'}=0$ for all other rows j from 2 to 3n+3 (as matrix A and C only have 0s in these columns). Using this, and the fact that all the additional terms for the first row are also 0, we get $J.\textbf{c'}$=0. \\
Thus \textbf{c'} is a vector in the right kernel of $J(x',\kappa^*)$ whenever \textbf{c} is a vector in the right kernel of $\bar{J}(\bar{x}',\bar{\kappa}^*)$\\
\medskip
Now, if $x'$ is a nondegenerate steady state of $G$, then no vector $y$ in the right kernel of $J(x',\kappa^*)$ satisfies the conservation law equation
\begin{eqnarray}
    \sum_{X^G_j\in L_E}y_j=T_{E} \\ \nonumber
    \sum_{X^G_j\in L_F}y_j=T_{F}
\end{eqnarray}
Since every element in the right kernel of $\bar{J}(\bar{x}',\bar{\kappa}^*)$ have the first $3n+3$ elements in the right kernel of $J(x',\kappa^*)$, and those are the only species involved in the conservation laws, the nondegeneracy condition holds for $G'$ with $\bar{x}'$. Hence $\bar{x}'$ is a nondegenerate steady state of $\bar{G}$
\end{proof}

\color{black}

\begin{theorem}\label{thm:induction}
 Let $i\in\{0,\ldots ,n\}$. If $\mathcal{P}^n_{S_i\rightleftharpoons 0}$ admits nondegenerate multistationarity with mass action kinetics, then $\mathcal{P}^{n+1}_{S_i\rightleftharpoons 0}$ does too.
\end{theorem}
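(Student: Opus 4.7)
The plan is to chain the two preparatory lemmas with the two Banaji modification theorems \Cref{thm:interb,thm:depb}, so that nondegenerate multistationarity travels through a short sequence of intermediate networks from $\mathcal{P}^{n}_{S_i\rightleftharpoons 0}$ to $\mathcal{P}^{n+1}_{S_i\rightleftharpoons 0}$. By hypothesis, there exist reaction rates $\kappa^*$ and two distinct nondegenerate positive steady states $x^{(1)},x^{(2)}$ of $\mathcal{P}^{n}_{S_i\rightleftharpoons 0}$ in a common stoichiometric class with total amounts $T=(T_E,T_F)$. First, I would invoke \Cref{lemma:n+1_steadystate} to lift these to positive steady states $\bar{x}^{(1)},\bar{x}^{(2)}$ of the auxiliary network $\bar{G}$ defined there, using the explicit recipe $\bar{x}_{S_{n+1}}=x_{S_n}x_E/x_F$ for the coordinate of the new substrate, and then apply \Cref{lemma:n+1_nondeg} to carry nondegeneracy across. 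Because the lifts agree with the original states on all pre-existing coordinates, they stay distinct, and because $(T_E,T_F)$ is preserved, they share a common stoichiometric class of $\bar{G}$. Hence $\bar{G}$ admits nondegenerate multistationarity.

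The second stage closes the gap between $\bar{G}$ and $\mathcal{P}^{n+1}_{S_i\rightleftharpoons 0}$. These two networks differ only in that the direct enzyme-catalysed reactions $S_n+E\to S_{n+1}+E$ and $S_{n+1}+F\to S_n+F$ present in $\bar{G}$ are resolved in $\mathcal{P}^{n+1}$ into the chains $S_n+E\rightleftharpoons ES_n\to S_{n+1}+E$ and $S_{n+1}+F\rightleftharpoons FS_{n+1}\to S_n+F$ through the two genuinely new enzyme-substrate complexes $ES_n$ and $FS_{n+1}$. I would apply \Cref{thm:interb} to simultaneously replace the two direct reactions by the forward halves of these chains, introducing $ES_n$ and $FS_{n+1}$ as the new species; the matrix of coefficients of the new species in the newly introduced intermediate complexes is $\beta=\mathrm{Id}_2$, of full rank $r=2$, so the hypothesis of \Cref{thm:interb} is satisfied and nondegenerate multistationarity is preserved. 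I would then install the two reverse dissociations $ES_n\to S_n+E$ and $FS_{n+1}\to S_{n+1}+F$ via two successive applications of \Cref{thm:depb}: each such reverse reaction has reaction vector equal to $-1$ times that of the corresponding forward binding reaction already in the network, hence is trivially a linear combination of existing reaction vectors. The network resulting from these four modifications is precisely $\mathcal{P}^{n+1}_{S_i\rightleftharpoons 0}$, completing the induction step.

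The conceptual heart of the argument, and what I would flag as the main obstacle, is the choice of the bridge network $\bar{G}$: a naive direct comparison between $\mathcal{P}^{n}_{S_i\rightleftharpoons 0}$ and $\mathcal{P}^{n+1}_{S_i\rightleftharpoons 0}$ is blocked by the sudden appearance of the two new intermediate complexes, which cannot be produced by a closed-form steady-state recipe. Inserting $\bar{G}$, where the extra interconversion between $S_n$ and $S_{n+1}$ is modelled by direct enzyme-catalysed reactions rather than full Michaelis-Menten chains, is exactly what makes the explicit lift of \Cref{lemma:n+1_steadystate} possible while simultaneously putting the network in the precise shape required by \Cref{thm:interb} to insert the new intermediates. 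The technically most delicate point is the nondegeneracy transfer supplied by \Cref{lemma:n+1_nondeg}, since it requires controlling the block structure of the enlarged Jacobian to rule out spurious tangent directions to the lifted stoichiometric subspace; once that is secured, the subsequent modifications are structural applications of \Cref{thm:interb,thm:depb}.
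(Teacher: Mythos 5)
Your proposal follows exactly the paper's own route: lift the two nondegenerate steady states to the bridge network $\bar{G}$ via \Cref{lemma:n+1_steadystate} and \Cref{lemma:n+1_nondeg}, then insert the intermediates $ES_n$ and $FS_{n+1}$ with \Cref{thm:interb} (with $\beta=\mathrm{Id}_2$ of full rank) and add the reverse binding reactions with \Cref{thm:depb}. The argument is correct and matches the paper's proof in both structure and detail.
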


\begin{proof}
Given two distinct nondegenerate steady states, $x_1$ and $x_2$, of $G=\mathcal{P}^n_{S_i\rightleftharpoons 0}$, we get two distinct nondegenerate steady states of $\bar{G}$, $\bar{x}_1$ and $\bar{x}_2$ as in \Cref{eq:n-site_ext1}, from \Cref{lemma:n+1_steadystate,lemma:n+1_nondeg}.\\

Now we use \cref{thm:interb} (\cite[Theorem 6]{Banaji2018}), which states that if we nondegenerately add new intermediate species to a reaction network with nondegenerate multistationarity (which they call multiple positive nondegenerate equilibria, or MPNE), the new network also has nondegenerate multistationarity. For $\bar{G}$, this process on adding two intermediates, namely $ES_n$ and $FS_{n+1}$ results in the network

\begin{footnotesize}\label{eq:n-site_ext2}
 \begin{align}
S_0 + E \ce{<=>} &ES_0 \ce{->}  \dots\ce{<=>}  ES_{n-1} \ce{->} S_n+E \ce{->} ES_n \ce{->} S_{n+1}+E\notag \\
S_{n+1}+F \ce{->} FS_{n+1}\ce{->} S_n + F  \ce{<=>} &FS_n \ce{->} \dots \ce{<=>}  FS_1 \quad \ce{->} S_0+F\notag\\
S_i \ce{<=>}0,
\end{align}
\end{footnotesize}
Also \Cref{thm:depb} (\cite[Theorem 1]{Banaji2018}) implies we can add the reverse of an existing reaction and still preserve nondegenerate multistationarity (see \cite[Remark 4.1]{Banaji2018}), hence  

\begin{footnotesize}\label{eq:n+1-site}
 \begin{align}
S_0 + E \ce{<=>} &ES_0 \ce{->}  \dots\ce{<=>}  ES_{n-1} \ce{->} S_n+E \ce{<=>} ES_n \ce{->} S_{n+1}+E\notag \\
S_{n+1}+F \ce{<=>} FS_{n+1}\ce{->} S_n + F  \ce{<=>} &FS_n \ce{->} \dots \ce{<=>}  FS_1 \quad \ce{->} S_0+F\notag\\
S_i \ce{<=>}0,
\end{align}
\end{footnotesize}

Which is just $\mathcal{P}^{n+1}_{S_i\rightleftharpoons 0}$, also has nondegenerate multistationarity.
\end{proof}

\subsection{Numerical illustration and discussion}\label{sec:induction_base}
We get the following numerical results using the Chemical Reaction Network Toolbox \cite{crn_toolbox}.\\
Take the 2 site double phosphorylation cycle with the species $S_0$ open, with rates as follows\\
 \begin{small}
 \begin{align}\notag
 \begin{split}
S_0 + E \ce{<=>[3.436][1.718]} ES_0 \ce{->[1.718]} S_1+E \ce{<=>[2.971][0.316]} ES_1 \ce{->[0.316]} S_2+E \\
S_2 + F  \ce{<=>[37.471][0.316]} FS_2 \ce{->[0.316]} S_{1}+F \ce{<=>[33.005][1.718]} FS_1 \ce{->[1.718]} S_{0}+F \\
S_0 \ce{<=>[1][1]} 0.\\
\end{split}
\end{align}
\end{small}
This system is multistationary with two steady states,\\ ($x$$=\{x_{S_0},x_{S_1},x_{S_2},x_{ES_0},x_{ES_1},x_{FS_1},x_{FS_2},x_{E},x_{F}\}$) \\$x^1=\{1,1.156,1.018,0.581,3.163,0.581,3.163,0.581,0.052\}$ and\\
$x^2=\{1,0.156,0.018,1.581,1.163,1.581,1.163,1.581,1.052\}$, which are two nonzero, nondegenerate steady states. By symmetry (see \Cref{rem:symmetry}), the 2-site phosphorylation cycle with $S_2$ open is also multi-stationary with nondegenerate steady states.\\

\medskip

Take the 2-site double phosphorylation cycle with the species $S_1$ open, with rates as follows\\
 \begin{small}
 \begin{align}\notag
 \begin{split}
S_0 + E \ce{<=>[68.609][7.297]} ES_0 \ce{->[7.297]} S_1+E \ce{<=>[79.347][39.673]} ES_1 \ce{->[39.673]} S_2+E \\
S_2 + F  \ce{<=>[186.499][19.836]} FS_2 \ce{->[19.836]} S_{1}+F \ce{<=>[29.19][14.595]} FS_1 \ce{->[14.595]} S_{0}+F \\
S_1 \ce{<=>[1][1]} 0.\\
\end{split}
\end{align}
\end{small}
This system is multistationary with two nondegenerate steady state concentrations,\\ ($x$$=\{x_{S_0},x_{S_1},x_{S_2},x_{ES_0},x_{ES_1},x_{FS_1},x_{FS_2},x_{E},x_{F}\}$) \\$x^1=\{1.156,1,0.156,0.137,0.025,0.068,0.05,0.025,0.068\}$ and\\
$x^2=\{0.156,1,1.156,0.05,0.068,0.025,0.137,0.068,0.025\}$\\

\medskip
 With this base case, we can prove \Cref{thm:main_multist}, which implies that for any length of the Double phosphorylation cycle, the substrates travelling in and out of the system do not disrupt the multistable behaviour of the cycle. They are free to participate in other reactions, which is important considering the substrates are often phosphorylated for use in different processes, for example, in cascades (\cite{hell-2016}), displaying the robustness of the system.

\color{black}

\section{Enzyme‐Opening and monostationarity}\label{sec:enzymes}

In contrast to what happens with the $n$-site double phosphorylation cycle when opening a set of substrates, when opening a set of enzymes its capacity for multistationarity is lost, and the network becomes monostationary. The strategy folloed to prove this is based on the network modifications defined in \Cref{sec:network_modifications}.  This is also the case for other networks, such as phosphorylation cascades, like the MAPK cascade (see \Cref{sec:cascade_monost}). Along this section we present the tools that allow us to prove this for the two networks mentioned, which can also be applied to further ones of similar structure. \Cref{sec:overview_monost} presents the main result and \Cref{sec:ACR} the tools to prove it. \Cref{sec:n-site_monost} and \Cref{sec:cascade_monost} are devoted to the application of these to the $n$-site double phosphorylation network and phosphorylation cascades respectively.

\subsection{Overview and main statement}\label{sec:overview_monost}

\begin{theorem}\label{thm:enzyme-open-mono}
  Let $G=(\X,\C,\R)$ be a network and let $\E\subset \X$ be a set of independently conserved species. If $G_{-\E}$ has at most $l$ steady states, then $G_{\E\rightleftharpoons0}$ also has at most $l$ steady states. In particular, if $G_{-\E}$ is monostationary, then $G_{\E\rightleftharpoons 0}$ is monostationary. This is the case if $G_{-\E}$ is (at most) monomolecular.
  
\end{theorem}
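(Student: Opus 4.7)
The plan is to decompose any positive steady state $x^*$ of $G_{\E\rightleftharpoons 0}$ into an ``enzyme part'' $x^*_\E$, which is completely pinned down by the inflow/outflow rates alone (an absolute-concentration-robustness phenomenon), and a ``non-enzyme part'' $x^*_{\mathcal{Z}}$ on $\mathcal{Z}:=\X\setminus\E$, which I will show is a positive steady state of the projected network $G_{-\E}$ for a suitably induced positive rate vector. Since $G_{-\E}$ has at most $l$ positive steady states in each compatibility class by hypothesis, this will immediately bound the number of positive steady states of $G_{\E\rightleftharpoons 0}$ by $l$.

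For the ACR step, I would use the independent conservation laws $L_1,\ldots,L_k$ from \Cref{def:independently_conserved}, so that $L_{i,E_i}\neq 0$ and $L_{i,E_j}=0$ for $j\neq i$. Since each $L_i$ is a conservation law of $G$, one has $L_i\cdot f_G(x)\equiv 0$. The species-formation function of $G_{\E\rightleftharpoons 0}$ is $f_G(x)+\sum_{E_j\in\E}(\k_{\mathrm{in},E_j}-\k_{\mathrm{out},E_j}x_{E_j})e_{E_j}$; at a steady state $x^*$ this vanishes, and taking the inner product with $L_i$ kills the $f_G$-term and, by independent conservation, leaves only the $j=i$ summand:
\[
L_{i,E_i}\bigl(\k_{\mathrm{in},E_i}-\k_{\mathrm{out},E_i}x^*_{E_i}\bigr)=0.
\]
Since $L_{i,E_i}\neq 0$, this forces $x^*_{E_i}=\k_{\mathrm{in},E_i}/\k_{\mathrm{out},E_i}$, uniquely determined by the rates for every $E_i\in\E$.

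For the projection step, fix $x^*_\E$ as above and let $X\in\mathcal{Z}$. The flow reactions only affect $\E$-coordinates, so $f_X^{G_{\E\rightleftharpoons 0}}(x)=f_X^G(x)$. Writing $x^y=x_{\mathcal{Z}}^{y_{\mathcal{Z}}}x_\E^{y_\E}$ and regrouping the reactions of $G$ by their projection $\tilde y\to\tilde y'$ onto $\mathcal{Z}$, I obtain
\[
f_X^G(x_{\mathcal{Z}},x^*_\E)=\sum_{\tilde y\to\tilde y'}\Biggl[\,\sum_{\substack{y\to y'\in\R\\ (y_{\mathcal{Z}},y'_{\mathcal{Z}})=(\tilde y,\tilde y')}}\!\!\k_{y\to y'}(x^*_\E)^{y_\E}\Biggr]x_{\mathcal{Z}}^{\tilde y}(\tilde y'_X-\tilde y_X).
\]
Projected self-loops $\tilde y=\tilde y'$ contribute $\tilde y'_X-\tilde y_X=0$ for $X\in\mathcal{Z}$ and drop out, so the outer sum runs exactly over the edges of $G_{-\E}$, and the bracketed expression is a strictly positive induced rate $\tilde\k_{\tilde y\to\tilde y'}$. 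Hence $x^*_{\mathcal{Z}}$ is a positive steady state of $G_{-\E}$ under $\tilde\k$.

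To finish, I would check that conservation laws transfer correctly. Using the block structure in \Cref{rem:indep_conserved-invertible}, the conservation laws of $G_{\E\rightleftharpoons 0}$ are precisely the conservation laws of $G$ that vanish on $\E$, and restriction to $\mathcal{Z}$ sets up a bijection with the conservation laws of $G_{-\E}$; a compatibility class $Wx=T$ of $G_{\E\rightleftharpoons 0}$ therefore corresponds to a compatibility class of $G_{-\E}$ with the same total amounts $T$. By hypothesis this class contains at most $l$ positive steady states of $G_{-\E}$ under $\tilde\k$, and combined with the uniquely determined $x^*_\E$ this yields at most $l$ positive steady states of $G_{\E\rightleftharpoons 0}$. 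The monomolecular consequence follows from \Cref{rmk:monomolecular} together with \Cref{thm:DefZero}, since every monomolecular network has deficiency zero and is hence monostationary. The main obstacle I anticipate is the projection identity in the second step: one has to verify carefully that the combinatorial regrouping of reactions of $G$ produces the mass-action right-hand side of $G_{-\E}$ for an honest positive rate vector, with reactions that become self-loops correctly absorbed; once that bookkeeping is in place, the ACR step and the conservation-law accounting reduce to routine consequences of independent conservation.
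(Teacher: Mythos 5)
Your proposal is correct and follows essentially the same route as the paper: the ACR step is the paper's \Cref{thm:open-is-ACR} (your inner-product computation with the $L_i$ is a direct specialization of \Cref{lemma:SumSteadyStates} to $H=\{E\rightleftharpoons 0\}$), and the projection step with induced rates $\tilde\k_{\tilde y\to\tilde y'}=\sum\k_{y\to y'}(x^*_\E)^{y_\E}$, including the self-loop removal and the compatibility-class accounting, is exactly the content of \Cref{lemma:ACRreduction}. The monomolecular conclusion via \Cref{rmk:monomolecular} also matches the paper.
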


A preliminary step in the proof requires noting that when opening a set of independently conserved species (see \Cref{def:independently_conserved}) in a network, the corresponding open network has \textit{Absolute Concentration Robustness (ACR)} on all those species, that is, for each of them there is a fixed value at positive steady state (see \cite{S-F} ans \Cref{sec:ACR}). This will be the content of \Cref{thm:open-is-ACR}.

A second step involves proving that when projecting $G_{\E\rightleftharpoons 0}$ over $\X\setminus\E$ the number of positive steady states of the projected network is at least that of $G_{\E\rightleftharpoons 0}$. Therefore, if $G_{\E\rightleftharpoons 0}$ admits multistationarity, so does $(G_{\E\rightleftharpoons 0})_{-\E}=G_{-\E}$ or, equivalently, if the later is monostationary so is $G_{\E\rightleftharpoons 0}$. 

As a consequence, detecting monostationarity for a network with less species will allow us to ensure monostationarity for a bigger network, as long as the species removed are independently conserved. This will give us a way to, for example, prove monostationarity of the semi-open extension of the $n$-site phosphorylation cycle where the open species are the two enzymes (\Cref{sec:n-site_monost}). The same will be possible for phosphorylation cascades, like the MAPK cascade (\Cref{sec:cascade_monost}).

\subsection{ACR, projection and inheritance}\label{sec:ACR}

We will say that a reaction network has \textit{Absolute Concentration Robustness (ACR)} in a certain species $X\in \X$ for a certain choice of kinetics, in the sense of \cite{S-F}, if the concentration of $X$ at positive steady state is constant with this choice of kinetics, independently of the stoichiometric compatibility class, provided that there is some positive steady state. We will be considering our networks with mass action kinetics, as we have done up until now. Still, this property or the constant may depend on the specific reaction rates. We will say that the network has ACR if it does for all reaction rates for which there is some positive steady state. 

The property of ACR confers a network a strong robustness against perturbations, as the concentration of the species under consideration will always return to the same stationary value. Moreover, this value does not depend on the initial concentrations.

In this work, we will use the property of ACR as a tool to guarantee monostationarity. In order to present our results about ACR, we need the following Lemma that will allow us to establish a connection between the possible steady state concentrations of species that are independently conserved in a network via a subnetwork.
\begin{lemma}[Steady‐State Projection]\label[lemma]{lemma:SumSteadyStates}
    Let $G=(\X,\C,\R)$ be a network and let $\E\subset \X$ be a set of independently conserved species. Let $H=(\E,\C',\R')$ be any other network with $\E$ as its set of species. If $z\in\mathbb{R}_{>0}^n$ is a steady state of $G\cup H$, then its projection $z_{\E}$ (\cref{def:projected_network}) is a steady state of $H$.
\end{lemma}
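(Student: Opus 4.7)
The plan is to pull apart the mass-action ODE of $G\cup H$ along the $\E$ and non-$\E$ coordinates and then use the independently-conserved structure of $G$ to cancel the $G$-contribution on $\E$-species at the steady state.

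First, I would write the mass-action dynamics of $G\cup H$ as
\[
\dot{x} \;=\; \Gamma_G\,\nu_G(x) \;+\; \widetilde{\Gamma}_H\,\nu_H(x_\E),
\]
where $\Gamma_G$, $\nu_G$ are the stoichiometric matrix and rate vector of $G$, and $\widetilde{\Gamma}_H$ is the stoichiometric matrix of $H$ embedded in $\mathbb{R}^{|\X|}$ by appending zero rows outside of $\E$. This embedding is legitimate because every reaction of $H$ involves only species in $\E$, so each of its reaction vectors has zero entries on $\X\setminus\E$ and each of its mass-action rates depends only on $x_\E$. Evaluating at the positive steady state $z$, the whole right-hand side vanishes, and reading off the rows indexed by any $X_j\notin\E$ (where $\widetilde{\Gamma}_H$ has a zero row) yields $(\Gamma_G)_j\,\nu_G(z)=0$ for every such $j$.

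Next, I would invoke the hypothesis that $\E$ is independently conserved in $G$ via \Cref{def:independently_conserved}: there exist conservation laws $L_1,\dots,L_k$ of $G$ with $(L_i)_{E_i}\neq 0$ and $(L_i)_{E_j}=0$ for $j\neq i$. Since $L_i\cdot\Gamma_G=0$, expanding
\[
0 \;=\; L_i\cdot \bigl(\Gamma_G\,\nu_G(z)\bigr) \;=\; (L_i)_{E_i}\,(\Gamma_G)_{E_i}\,\nu_G(z) \;+\; \sum_{j\notin\E}(L_i)_j\,(\Gamma_G)_j\,\nu_G(z)
\]
and using the previous step to kill the second sum leaves $(L_i)_{E_i}\,(\Gamma_G)_{E_i}\,\nu_G(z)=0$. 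Dividing through by the nonzero scalar $(L_i)_{E_i}$ gives $(\Gamma_G)_{E_i}\,\nu_G(z)=0$. Running $i=1,\dots,k$, the entire $\E$-block of $\Gamma_G\,\nu_G(z)$ is zero, so restricting the full steady-state equation to the $\E$-rows yields $0 = (\Gamma_G)_\E\,\nu_G(z) + \Gamma_H\,\nu_H(z_\E) = \Gamma_H\,\nu_H(z_\E)$, which is exactly the condition that $z_\E$ be a steady state of $H$.

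The only mildly delicate point is that the $L_i$ are conservation laws of $G$, not of $G\cup H$, so a priori one cannot simply contract the combined steady-state equation against $L_i$ and forget about $H$. The whole argument hinges on the observation that the non-$\E$ coordinates of the dynamics of $G\cup H$ coincide with those of $G$ alone (because $\widetilde{\Gamma}_H$ is zero outside $\E$); it is this that lets one transfer $G$-only conservation data into a conclusion about the $G\cup H$ steady state, and in turn about $H$.
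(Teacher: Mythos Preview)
Your proof is correct and uses essentially the same idea as the paper's: exploit that the conservation laws $L_i$ of $G$ annihilate the $G$-part of the dynamics and have a diagonal $\E$-block, so that at a steady state of $G\cup H$ only the $H$-contribution on $\E$ survives. The paper streamlines your two-step argument by contracting $L_i$ directly against the full $G\cup H$ steady-state equation (rather than first isolating $f^G_j(z)=0$ for $j\notin\E$) and then inverting the $\E$-block, but the logical content is the same.
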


\begin{proof}
  Let us assume, without loss of generality, that $\E=\{X_i\}_{i=1,\ldots ,k}$ and write $\dfrac{dx}{dt} = f^G(x)$ and $\dfrac{d\tilde{x}}{dt} = f^H(\tilde{x})$
for the mass action ODE systems for $G$ and $H$ respectively, where $x=(x_1,\ldots ,x_n)$ and $\tilde{x}=(x_1,\ldots ,x_k)$. Then the mass action system $\dfrac{dx}{dt} = f^{G\cup H}(x)$ for $G\cup H$ is given by
        \begin{align*}
       \dfrac{dx_i}{dt} = & f^G_i(x_1,\dots,x_n) + f^H_i(x_1,\dots,x_k) \;\mbox{for}\, i=1,\ldots,k\\
       \dfrac{dx_i}{dt} = & f^G_i(x_1,\dots,x_n) \;\mbox{for}\, i=k+1,\ldots,n
    \end{align*}

    By hypothesis, there exist $w_{ij}\in \mathbb{R}$ for $i=1,\dots,k$, $j=1,\ldots ,n$ that gives a linear dependency relation among the polynomials $f^G_j$ for each $j=1,\ldots ,n$. Moreover, the $w_{ij}$ can be chosen in a way so that the $k\times k$-matrix $W_k=(w_{ij})_{i,j\in\{1,\ldots ,k\}}$ is invertible (see \Cref{rem:indep_conserved-invertible}):
    \begin{equation}
        \sum_{j=1}^n w_{ij} f^G_j(x_1,\dots,x_n) = 0.
    \end{equation}
    Let $z = (z_1,\dots,z_n)$ be a positive steady state of $G \cup H$, then for $i=1,\dots,k$,
    \begin{align}
        0 &= \sum_{j=1}^n w_{ij} f^{G \cup H}_j(z_1,\dots,z_n) \notag \\
        &= \sum_{j=1}^n w_{ij} f^G_j(z_1,\dots,z_n) + \sum_{j=1}^k w_{ij} f^H_j(z_1,\dots,z_k) \label{eq:f_GcupH}\\
        &= \sum_{j=1}^k w_{ij} f^H_j(z_1,\dots,z_k) \notag
    \end{align}
    But, as $W_k$ is invertible, this implies that
    \begin{equation}
        f^H(z_1,\dots,z_k) = 0
    \end{equation}
    so $(z_1,\dots,z_k)$ is a steady state of $H$.
\end{proof}

The following is a direct consequence of the lemma. Note that it slightly expands \cite[Theorem 27]{PhD-Torres}.

\begin{theorem}[ACR Emergence under Opening]\label{thm:open-is-ACR}  
Let $G=(\X,\C,\R)$ be a network and let $\E\subset \X$ be a set of independently conserved species. Then  $G_{\E\rightleftharpoons 0}$ has ACR in all species of $\E$ with values $x_{E}=\frac{\kappa_{0\to E}}{\kappa_{E\to 0}}$ for $E \in \E$. 

More generally, if we open partially the species of $\E$, then:
\begin{itemize}
    \item if for any $E \in \E$ we have $0\ce{->} E$ but no $E\ce{->} 0$, then there are no steady states for the resulting network,
    \item if for any $E$ we have $E\ce{->} 0$ but no $0\ce{->} E$ then there can only be boundary steady states (with $x_{E}=0$).
\end{itemize}
\end{theorem}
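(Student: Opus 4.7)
The plan is to express $G_{\E\rightleftharpoons 0}$ as a union $G\cup H$, where $H$ is an auxiliary network supported only on $\E$, and then invoke the Steady-State Projection Lemma (\Cref{lemma:SumSteadyStates}) to transfer the constraints $f^H=0$ onto the concentrations of the species in $\E$ at any steady state of $G_{\E\rightleftharpoons 0}$. Because $H$ will be simple enough that its steady states can be read off by inspection, this pins down the $\E$-coordinates of any steady state of $G_{\E\rightleftharpoons 0}$ directly.

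For the main statement, I would take $H$ to be the network with species set $\E$ and reactions $\{0\rightleftharpoons E\}_{E\in\E}$, so that by construction $G_{\E\rightleftharpoons 0}=G\cup H$. The hypothesis that $\E$ is independently conserved in $G$ is exactly what \Cref{lemma:SumSteadyStates} requires, so any positive steady state $z$ of $G_{\E\rightleftharpoons 0}$ has projection $z_\E$ which is a positive steady state of $H$. Because $H$ decouples species by species, its mass action ODE is just the scalar equation $\dot{x}_E=\kappa_{0\to E}-\kappa_{E\to 0}\,x_E$ for each $E\in\E$, with unique positive zero $x_E=\kappa_{0\to E}/\kappa_{E\to 0}$. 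Hence every positive steady state of $G_{\E\rightleftharpoons 0}$ has $x_E$ fixed at this value regardless of the stoichiometric compatibility class, which is precisely ACR on $\E$ with the stated constants.

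For the partially open variants, I would take $H$ to contain only those flow reactions that are actually present and repeat the argument. If only $0\to E$ is present for some $E\in\E$, then $H$'s species-$E$ equation is $\dot{x}_E=\kappa_{0\to E}>0$, which never vanishes, and the lemma rules out any positive steady state of $G_{\E\rightleftharpoons 0}$. If only $E\to 0$ is present, then $\dot{x}_E=-\kappa_{E\to 0}\,x_E$ vanishes only at $x_E=0$; since the proof of the projection lemma relies only on the linearity of the conservation laws of $G$, the same argument applies to any steady state (not just positive ones) and forces $x_E=0$ at every steady state of $G_{\E\rightleftharpoons 0}$, giving the stated boundary conclusion. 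The only point genuinely requiring care is verifying this last extension of \Cref{lemma:SumSteadyStates} beyond positive steady states, since the lemma is stated only in the positive case; everything else reduces to solving a decoupled linear scalar ODE, so I expect no deeper obstacle.
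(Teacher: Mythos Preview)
Your proposal is correct and follows essentially the same approach as the paper: take $H$ to be the flow subnetwork on $\E$, write $G_{\E\rightleftharpoons 0}=G\cup H$, and apply \Cref{lemma:SumSteadyStates} to conclude that the $\E$-coordinates of any steady state are determined by the (trivially computed) steady states of $H$. Your remark about extending the lemma to non-positive steady states for the boundary case is well taken---the paper glosses over this point, and as you note the argument of \Cref{lemma:SumSteadyStates} is purely algebraic in the conservation-law identities and carries through unchanged.
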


\begin{proof}
    Consider $H=\{E\ce{<=>} 0:E\in\E\}$ so that $G_{\E\rightleftharpoons 0}=G\cup H$. It is easy to check that $H$ has a unique steady state $z$, with $z_E = \kappa_{0\to E}/\kappa_{E\to 0}$ for each $E\in \E$. Thus, by \Cref{lemma:SumSteadyStates}, for any steady state $x\in \mathbb{R}^n_{>0}$ of $G_{\E\rightleftharpoons 0}$ we have that $x_{E} = \kappa_{0\to E}/\kappa_{E\to 0}$ for each $E\in \E$, so $G_{\E\rightleftharpoons 0}$ has ACR in all species of $\E$.

    Consider now a subnetwork $H'\subset H$. If $0\ce{->}E \in H'$ but $E \ce{->} 0\notin H'$ for any species $E\in \E$, then $H'$ has no steady states, and thus by \Cref{lemma:SumSteadyStates}, $G\cup H'$ has no steady states. Similarly, if $E \ce{->} 0\in H'$ but $0\ce{->} E \notin H'$, then $H'$ can only have boundary steady states with $x_{E}=0$, thus $G\cup H'$ can only have boundary states with $z_{E}=0$.
\end{proof}

\begin{example}\label[example]{ex:open-is-ACR}
    Consider $G$ given by \eqref{eq:E-graph} as in \cref{ex:E-graph}. Note that it has a conservation law given by the conservation of mass, that is, the total amount of $X$ plus $Y$ plus $Z$ is conserved. Thus, $\E = \{Z\}$ is independently conserved, and by \cref{thm:open-is-ACR}, $G_{Z\rightleftharpoons 0}$ has ACR in $Z$, with value $x_Z = \kappa_{0\to Z}/\kappa_{Z\to 0}$. Note also that $G_{Z\rightleftharpoons 0}$ doesn't have further (linearly independent) any conservation laws.

    \begin{align}\label{tikz:open}
    \begin{aligned}
    \begin{tikzpicture}[scale=1.2]

\begin{axis}[view={105}{30}, axis line style=white, width=5.5cm, height=5.5cm, ticks=none, xmin=0, xmax=1.9, ymin=0, ymax=1.9, zmin=0, zmax=2.5]

    \tikzset{bullet/.style={inner sep=1pt,outer sep=1.5pt,draw,fill,Egraphcolor,circle}};
    \tikzset{bullet2/.style={inner sep=1pt,outer sep=1.5pt,draw,fill,orange,circle}};
    \tikzset{myarrow/.style={arrows={-stealth},thick,Egraphcolor}};
    \tikzset{myarrow2/.style={arrows={-stealth},thick,orange}};
    \tikzset{mygrid/.style={very thin,gray!50}};
    \tikzset{myaxis/.style={thick,gray}};    
    \newcommand{\ab}{2.5};    
 
    \draw[mygrid] (1,0,\ab) -- (1,0,0) -- (1,\ab,0);
    \draw[mygrid] (2,0,\ab) -- (2,0,0) -- (2,\ab,0);
    \draw[mygrid] (0,1,\ab) -- (0,1,0) -- (\ab,1,0);
    \draw[mygrid] (0,2,\ab) -- (0,2,0) -- (\ab,2,0);
    \draw[mygrid] (0,\ab,1) -- (0,0,1) -- (\ab,0,1);
    \draw[mygrid] (0,\ab,2) -- (0,0,2) -- (\ab,0,2);
        
    \draw[myaxis] (0,0,0) -- (\ab,0,0);    
    \draw[myaxis] (0,0,0) -- (0,\ab,0);    
    \draw[myaxis] (0,0,0) -- (0,0,\ab);    
    
    \node[bullet] (X) at (1,0,0) {};
    \node[bullet] (Y) at (0,1,0) {};
    \node[bullet] (Z) at (0,0,1) {};
    \node[bullet] (2Z) at (0,0,2) {};
    \node[bullet] (Y+Z) at (0,1,1) {};
    \node[bullet2] (0) at (0,0,0) {};
    
    \node [below right] at (X) {\small $\mathsf{X}$};
    \node [above right] at (Y) {\small $\mathsf{Y}$};
    \node [right] at (Z) {\small $\mathsf{Z}$};
    \node [above right] at (2Z) {\small $\mathsf{2Z}$};
    \node [above right] at (Y+Z) {\small $\mathsf{Y+Z}$};
    \node [below right] at (0) {\small $\mathsf{0}$};
    \node[right] at (0,1.1,2.25) {$G_{Z\rightleftharpoons 0}$};
    
    \draw[myarrow,transform canvas={xshift=-1pt,yshift=0pt}]  (Y) to node[xshift=0pt,yshift=-3pt] {} (X);
    \draw[myarrow,transform canvas={xshift=0pt,yshift=1pt}] (X) to node[xshift=0pt,yshift=-5pt] {} (Z);
    \draw[myarrow,transform canvas={xshift=1pt,yshift=0.5pt}] (2Z) to node[xshift=12pt,yshift=4pt] {} (Y+Z);
    \draw[myarrow2,transform canvas={xshift=1pt,yshift=0pt}] (0) to node[xshift=12pt,yshift=4pt] {} (Z);
    \draw[myarrow2,transform canvas={xshift=-1pt,yshift=0pt}] (Z) to node[xshift=12pt,yshift=0pt] {$H$} (0);
\end{axis}

\draw[rounded corners,lightgray] (current bounding box.south west) rectangle (current bounding box.north east);

\end{tikzpicture}
    \end{aligned}
    \end{align}
\end{example}

\begin{lemma}[ACR‐Reduction]\label[lemma]{lemma:ACRreduction}
  Let $G=(\X,\R,\C)$ be a network having ACR in a subset $\E\subset \X$ of species such that no species in $\E$ is involved in any conservation law. If $G$ has $l$ positive steady states in some stoichiometric class, then the projection $G_{-\E}$ of $G$ onto $\X\!\setminus\!\E$ has at least $l$ positive steady states in some stoichiometric class.
\end{lemma}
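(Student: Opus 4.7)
The plan is to build the $l$ positive steady states of $G_{-\E}$ by projecting the given steady states of $G$ onto the coordinates in $\X\setminus\E$, after reassigning rate constants on $G_{-\E}$ so as to absorb the ACR values. Let $c_E\in\mathbb{R}_{>0}$ be the ACR value of $E\in\E$, let $\kappa^*$ be rate constants for which $G$ admits $l$ distinct positive steady states $x^{(1)},\dots,x^{(l)}$ in a common stoichiometric class, and write $\tilde{x}^{(j)}$ for the projection of $x^{(j)}$ onto $\X\setminus\E$ and $\pi$ for the coordinate projection. By ACR, $x^{(j)}_E=c_E$ for every $j$ and every $E\in\E$.

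First I would define rate constants $\tilde{\kappa}$ on $G_{-\E}$ by assigning, for each reaction $y\to y'$ of $G$ whose projection $\pi(y)\to\pi(y')$ is not a self-loop, a contribution $\kappa^*_{y\to y'}\prod_{E\in\E} c_E^{y_E}$ to $\tilde{\kappa}_{\pi(y)\to\pi(y')}$, summing over preimages when several reactions of $G$ project onto the same reaction of $G_{-\E}$. A direct calculation, using $c_E = x^{(j)}_E$ and the fact that $\pi$ leaves the coordinates in $\X\setminus\E$ untouched (so that $\pi(y')_X-\pi(y)_X=y'_X-y_X$ for $X\in\X\setminus\E$), then shows that the species formation function of $(G_{-\E},\tilde{\kappa})$ at $\tilde{x}^{(j)}$ equals coordinate-by-coordinate the species formation function of $(G,\kappa^*)$ at $x^{(j)}$, which vanishes by hypothesis. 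Self-loops introduced by the projection contribute zero to every coordinate of $\X\setminus\E$ and can be safely discarded.

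Next I would check that the $\tilde{x}^{(j)}$ lie in a common stoichiometric class of $G_{-\E}$ and are pairwise distinct. By \Cref{rmk:same_compatibility}, being in the same class amounts to $\tilde{x}^{(j)}-\tilde{x}^{(k)}\in \imG_{-\E}$; since $\imG_{-\E}=\pi(\imG)$ by \Cref{rmk:projected_stoich}, this is immediate from $x^{(j)}-x^{(k)}\in \imG$. Distinctness is immediate from ACR: if two projections coincided, the two originals would agree on every coordinate (agreeing on $\E$ because both equal $c_E$, and on $\X\setminus\E$ by assumption), contradicting $x^{(j)}\neq x^{(k)}$.

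The most delicate step will be the bookkeeping when several reactions of $G$ collapse to the same projected reaction: the additive convention used to define $\tilde{\kappa}$ is what makes the term-by-term identification of species formation functions work at the evaluation point $\tilde{x}^{(j)}$, where the monomial $(x^{(j)})^y$ factors as $\prod_{E\in\E} c_E^{y_E} \cdot (\tilde{x}^{(j)})^{\pi(y)}$. The hypothesis that no species in $\E$ appears in any conservation law of $G$ is used to ensure that the conservation structure transfers cleanly: conservation laws of $G$ correspond bijectively to conservation laws of $G_{-\E}$ by simple restriction, so a common stoichiometric class upstairs corresponds to a common stoichiometric class downstairs, with no constraints created or destroyed by the projection.
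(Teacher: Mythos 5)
Your proposal is correct and follows essentially the same route as the paper: absorb the ACR values into new rate constants for $G_{-\E}$ by summing $\kappa^*_{y\to y'}\prod_{E\in\E}c_E^{y_E}$ over reactions that collapse to the same projected reaction, verify the projected points are steady states, and use \Cref{rmk:same_compatibility} together with \Cref{rmk:projected_stoich} to place them in a common stoichiometric class. The only differences are cosmetic — you treat all of $\E$ at once where the paper reduces to one species and iterates, and you make explicit the (correct) distinctness argument via ACR that the paper leaves implicit.
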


\begin{proof}
    Without loss of generality, it is enough to show the statement is true when $\E$ only has one species (and iterate the procedure otherwise). Specifically, suppose that $\E = \{X_1\}$ and $G$ has ACR in $X_1$. Consider the modification $G_{-X_1}$ given by removing the species $X_1$. Suppose also that $G$ has $l$ positive steady states in some stoichiometric class. We want to show that $G_{-X_1}$ also does.
   
   Let $a$ be the ACR value of $X_1$ in $G$, i.e., any positive steady state $z = (z_1,\dots,z_n)$ of $G$ satisfies $z_1 = a$.  We will show that for a given choice of reaction rates $\k^*$ for $G_{-X_1}$, there is an injection between the steady states of $G$ in a certain stoichiometric class and the steady states of $G_{-X_1}$ in a certain stoichiometric class of the latter.

    Given a choice $\kappa^*$ of reaction rates on $G_{-X_1}$, let us write the associated mass action ODE system for $G_{-X_1}$ as
    \begin{align*}
        \dfrac{dx_i}{dt} &=\tilde{f_i}(x_2,\dots,x_n)\\
        &= \sum_{\tilde{y} \to \tilde{y}'\in \tilde{\R}} \kappa_{\tilde{y}\to \tilde{y}'}' x_2^{\tilde{y}_2}\cdots x_n^{\tilde{y}_n} (\tilde{y}_i' - \tilde{y}_i)
    \end{align*}
    for $i=2,\dots,n$, where $\tilde{\R}$ is the set of reactions for $G_{-X_1}$. Note that by construction any reaction $(y_2,\dots,y_n) \ce{->} (y_2',\dots,y_n')$ in $G_{-X_1}$ arises from a reaction $(y_1,\dots,y_n) \ce{->} (y_1',\dots,y_n')$ in $G$ where the species $X_1$ was removed. Even more, when seen as an E-graph, we can think of the vertices and edges (complexes and reactions) of $G_{-X_1}$ as a projection via $\pi:\mathbb{R}^{|\X|}\longrightarrow \mathbb{R}^{|\X\setminus \E|}$ of the vertices and edges of $G$ onto the coordinates $i=2,\dots, n$. Note also that any reaction in $G$ projects onto one of $G_{-X_1}$, although several reactions in $G$ can be mapped to the same one. Hence one can rewrite for $G$ the function $f_i$ as
     \begin{align*}
        f_i(x) &= \sum_{y \to y'\in \R} \kappa_{y\to y'} x_1^{y_1} x_2^{y_2}\cdots x_n^{y_n} (y_i' - y_i)\\
        &= \sum_{\tilde{y} \to \tilde{y}'\in \tilde{\R}} \sum_{y\to y'\in \R: \pi(y)=\tilde{y}, \pi(y')=\tilde{y}'} \kappa_{y\to y'} x_1^{y_1} x_2^{y_2}\cdots x_n^{y_n} (y_i' - y_i)\\
        &= \sum_{\tilde{y} \to \tilde{y}'\in \tilde{\R}} \left(\sum_{y\to y'\in \R: \pi(y)=\tilde{y}, \pi(y')=\tilde{y}'} \kappa_{y\to y'} x_1^{y_1} \right) x_2^{\tilde{y}_2}\cdots x_n^{\tilde{y}_n} (\tilde{y}'_i - \tilde{y}_i)
    \end{align*}
    \color{black}
    for $i=2,\dots,n$, where $\pi((y_1,\dots,y_n))=(y_2,\dots,y_n)$ is the projection over the coordinates $i=2,\dots, n$.

   Now any positive steady state $z=(a,z_2,\ldots ,z_n)$ of $G$ satisfies:

    \begin{equation*}
        f_i(z)= \sum_{\tilde{y} \to \tilde{y}'\in \tilde{\R}} \left(\sum_{y\to y'\in \R: \pi(y)=\tilde{y},\newline \pi(y')=\tilde{y}'} \kappa_{y\to y'} a^{y_1} \right) z_2^{\tilde{y}_2}\cdots z_n^{\tilde{y}_n} (\tilde{y}'_i - \tilde{y}_i)=0
    \end{equation*}

    for $i=2,\dots,n$. But then, any such $z$ satisfies 
    $$  0= \sum_{\tilde{y} \to \tilde{y}'\in \tilde{\R}} \tilde{\kappa^*}_{\tilde{y}\to \tilde{y}'} z_2^{\tilde{y}_2}\cdots z_n^{\tilde{y}_n} (\tilde{y}'_i - \tilde{y}_i)= f_i'(z_2,\dots,z_n)$$
    for $i=2,\dots, n$, with the rates
    \begin{equation}\label{eq:k'}
        \tilde{\kappa^*}_{\tilde{y}\to \tilde{y}'} = \sum_{y\to y'\in \R: \pi(y)=\tilde{y}, \pi(y')=\tilde{y}'} \kappa_{y\to y'} a^{y_1},
    \end{equation}
and is therefore a positive steady state of $G_{-X_1}$.
   Finally, recall from \cref{rmk:projected_stoich} that the stoichiometric subspace $\imG_{-X_1}$ of $G_{-X_1}$ is just the projection of the stoichiometric subspace $\imG$ of $G$.

    Now, suppose that $G$ has $l$ different steady states $z^j=(a,z^j_2,\dots,z^j_n)$, $j=1,\ldots ,l$ for some $\k^*$ in the same stoichiometric compatibility class. Then, these give rise to $l$ different steady states $\tilde{z}^j=(z^j_2,\dots, z^j_n)$ of $G_{-X_1}$ for $\tilde{\k^*}$ obtained from $\k^*$ and $a$ as in \eqref{eq:k'}, which will also be in the same compatibility class of $G_{-X_1}$: indeed, for any $j,k\in\{1,\dots,l\}$, $j\neq k$, as $z^j$ and $z^k$ are in the same compatibility class, by \cref{rmk:same_compatibility} we have that $z^k - z^j = (0,z^k_2 - z^j_2,\dots,z^k_n-z^j_n) \in \imG$. But then $\tilde{z}^k  -\tilde{z}^j = (z^k_2 - z^j_2,\dots,z^k_n-z^j_n)\in \imG_{-X_1}$ the stoichiometric subspace of $G_{-X_1}$, thus proving that all projected steady states $\tilde{z}^j$ are in the same compatibility class. 
    Thus, $G_{-X_1}$ has $l$ different steady states in the same compatibility class for some choice of reaction rates $\tilde{\kappa^*}$.
\end{proof}

\begin{example}\label[example]{ex:ACRreduction}
    Consider $G$ given by \eqref{ex:E-graph}. We will prove that $G_{Z\rightleftharpoons 0}$ (see \eqref{tikz:open}) is monostationary by using the previous results and showing that $G_{-Z}$ is monostationary, thus exemplifying \cref{thm:enzyme-open-mono}.

    As we saw in \cref{ex:open-is-ACR}, $G_{Z\rightleftharpoons0}$ has ACR in $Z$, and there are no conservation laws, thus we can apply \cref{lemma:ACRreduction} to $G_{Z\rightleftharpoons0}$. Note that $(G_{Z\rightleftharpoons0})_{-Z}$ gives:
    \begin{small}
    \begin{align}\label{eq:G_Z-ACRreduction}
    \begin{aligned}
    Y \ce{->} X \ce{->} 0,\\
    0 \ce{->} Y.
    \end{aligned}
    \end{align}
    \end{small}
    which is the same reaction network as $G_{-Z}$ (see \eqref{eq:G_E}) as shown in \cref{ex:projection}. Because $G_{-Z}$ is monomolecular, it has deficiency zero and thus it is monostationary (as shown in \cref{rmk:monomolecular}). Moreover, as it is weakly reversible and has no conservation laws, there is a unique steady state (see \Cref{thm:DefZero}). Thus, \Cref{lemma:ACRreduction} implies that $G_{Z\rightleftharpoons 0}$ is monostationary too.
\end{example}

The proof of \Cref{thm:enzyme-open-mono} now follows from taking $G_{\E\rightleftharpoons0}$ in \cref{lemma:ACRreduction}, as \cref{thm:open-is-ACR} guarantees that the hypotheses are satisfied. The last statement is a consequence of \cref{rmk:monomolecular}.

Consider $G$ and $H$ satisfying the conditions of \cref{lemma:SumSteadyStates}, that is, $G=(\X,\C,\R)$, $\E\subset \X$ set of independently conserved species, and $H = (\E, \C', \R')$. Then, more generally, by \cref{lemma:SumSteadyStates} and \cref{lemma:ACRreduction} we see that if $G_{-\E}$ has at most $l$ steady states and $H$ has a unique steady state, then $G\cup H$ also has at most $l$ steady states. This is because $(G\cup H)_{-\E} = G_{-\E}$. In summary, we have the following proposition:

\begin{proposition}
    Let $G=(\X,\C,\R)$ be a network and let $\E=\{E_i\}_{i=1,\dots,k}\subset \X$ be a set of independently conserved species. Let $H=(\E,\C',\R')$ be a network with $\E$ as its set of species such that it has a unique steady state. If $G_{-\E}$ has at most $l$ steady states, then $G\cup H$ also has at most $l$ steady states.
\end{proposition}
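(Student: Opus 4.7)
My plan is to adapt the argument used to derive \Cref{thm:enzyme-open-mono} from \Cref{thm:open-is-ACR} and \Cref{lemma:ACRreduction}, replacing the special flow network $H = \{E\rightleftharpoons 0\}_{E\in\E}$ by the general $H$ in the hypothesis. The two main ingredients will be \Cref{lemma:SumSteadyStates}, which will force ACR on $\E$ out of the uniqueness assumption on $H$, and \Cref{lemma:ACRreduction}, which will reduce the steady-state count of $G\cup H$ to that of $(G\cup H)_{-\E}$; I will then identify the latter with $G_{-\E}$.

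First I would apply \Cref{lemma:SumSteadyStates} to $G\cup H$. Since $\E$ is independently conserved in $G$, the lemma tells us that the projection $z_{\E}$ of any positive steady state $z$ of $G\cup H$ is itself a positive steady state of $H$. By hypothesis $H$ has a unique positive steady state $z^*$, so $z_{\E}=z^*$ on every positive steady state of $G\cup H$; in particular, $G\cup H$ has ACR on every species of $\E$, with ACR values given by the coordinates of $z^*$.

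To then invoke \Cref{lemma:ACRreduction} I would verify that no species of $\E$ is involved in any conservation law of $G\cup H$. Any conservation law $w\in\mathbb{R}^n$ of $G\cup H$ must annihilate both the reaction vectors of $G$ and those of $H$; since the reactions of $H$ are supported on $\E$, the restriction $w|_{\E}$ is necessarily a conservation law of $H$ viewed as a network on $\E$ alone. Uniqueness of the positive steady state of $H$ forces $H$ to have no nontrivial (positive) conservation law, since otherwise moving along such a direction would produce a one-parameter family of positive steady states of $H$. Hence $w|_{\E}=0$ and the hypothesis of \Cref{lemma:ACRreduction} is satisfied.

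Applying \Cref{lemma:ACRreduction} contrapositively then bounds the maximum number of positive steady states of $G\cup H$ in any stoichiometric compatibility class by that of $(G\cup H)_{-\E}$ in any such class. The final observation is that $(G\cup H)_{-\E} = G_{-\E}$: every reaction of $H$ has all of its complexes built from $\E$, so the projection of \Cref{def:projected_network} sends each reaction of $H$ to the self-loop $0\to 0$, which is deleted, leaving only the reactions coming from $G_{-\E}$. Combined with the assumption that $G_{-\E}$ has at most $l$ positive steady states in any class, this yields the proposition. The one delicate point is the reading of ``unique steady state'' on $H$: the conservation-law step uses global uniqueness rather than uniqueness per compatibility class (for the $H$ used in \Cref{thm:enzyme-open-mono} this is vacuous since that $H$ already has trivial stoichiometric subspace, but in the general statement it is the real content of the hypothesis).
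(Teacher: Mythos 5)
Your proposal follows the paper's route exactly: \Cref{lemma:SumSteadyStates} plus uniqueness of the steady state of $H$ gives ACR on $\E$ in $G\cup H$, then \Cref{lemma:ACRreduction} bounds the steady states of $G\cup H$ by those of $(G\cup H)_{-\E}$, which is identified with $G_{-\E}$ since every reaction of $H$ projects to a deleted self-loop at $0$. You go one step further than the paper by explicitly checking the hypothesis of \Cref{lemma:ACRreduction} that no species of $\E$ appears in a conservation law of $G\cup H$ (the paper leaves this implicit), but your justification there is not sound as stated: a conservation law $w$ of $H$ means $w\cdot x$ is constant along trajectories, not that translating a steady state by $w$ yields another steady state, so ``moving along such a direction'' does not by itself produce a one-parameter family of positive steady states (that inference would need, e.g., the implicit function theorem at a nondegenerate state, and fails in principle at degenerate ones). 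The correct reduction is simply that $w|_{\E}$ must annihilate the reaction vectors of $H$; in the paper's intended applications ($H=\{E\rightleftharpoons 0\}_{E\in\E}$) these span all of $\mathbb{R}^{\E}$ so $w|_{\E}=0$ immediately, whereas for a genuinely arbitrary $H$ one should either add that hypothesis or supply a correct argument deriving it from uniqueness. This is a minor blemish in an otherwise faithful and more careful rendering of the paper's argument.
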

    
\medskip

\subsection{Application to the $n$‐site double phosphorylation cycle}\label{sec:n-site_monost}

\begin{theorem}\label{thm:enzyme-open-nsite}
  The $n$‐site double phosphorylation cycle \eqref{eq:n-site} with the enzymes $\E=\{E,F\}$ open, $\mathcal{P}^n_{\E\rightleftharpoons 0}$, is monostationary.
\end{theorem}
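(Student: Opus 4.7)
The strategy is a direct application of \Cref{thm:enzyme-open-mono} with $G=\mathcal{P}^n$ and $\E=\{E,F\}$. To invoke it I need to check two things: that $\{E,F\}$ is independently conserved in $\mathcal{P}^n$, and that the projected network $\mathcal{P}^n_{-\{E,F\}}$ is (at most) monomolecular, so that \Cref{rmk:monomolecular} yields monostationarity of the projection.

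For the first point I would simply read off the conservation laws in \eqref{eq:cl_n-site}: the law $L_E$ has coefficient $1$ on $E$ and coefficient $0$ on $F$, while $L_F$ has coefficient $1$ on $F$ and coefficient $0$ on $E$. This is exactly the pattern required by \Cref{def:independently_conserved}, so $\{E,F\}$ is independently conserved.

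For the second point I would carry out the projection from \Cref{def:projected_network} on each reaction of \eqref{eq:n-site}. The intermediates $ES_i$ and $FS_j$ are themselves species (not the formal sums $E+S_i$ or $F+S_j$), so they are unaffected by deleting $E$ and $F$ from complex coordinates. The binding and release reactions become
\begin{align*}
S_0 &\rightleftharpoons ES_0 \to \cdots \rightleftharpoons ES_i \to S_{i+1} \rightleftharpoons \cdots \rightleftharpoons ES_{n-1} \to S_n,\\
S_n &\rightleftharpoons FS_n \to \cdots \rightleftharpoons FS_i \to S_{i-1} \rightleftharpoons \cdots \rightleftharpoons FS_1 \to S_0,
\end{align*}
which contains no self-loops. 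Every complex is a single species, so $\mathcal{P}^n_{-\{E,F\}}$ is monomolecular, and by \Cref{rmk:monomolecular} has deficiency zero and is monostationary.

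Finally I would apply \Cref{thm:enzyme-open-mono} with $l=1$ to conclude that $\mathcal{P}^n_{\{E,F\}\rightleftharpoons 0}$ is monostationary, proving the theorem. I do not expect any real obstacle here: the two hypothesis checks are essentially bookkeeping, and the substance of the argument has already been done in \Cref{thm:open-is-ACR} and \Cref{lemma:ACRreduction}. The only point that requires minimal care is interpreting the projection correctly, namely that the enzyme-substrate intermediates $ES_i$ and $FS_j$ are atomic species in the formalism of the network and are therefore preserved by the projection that removes only the coordinates corresponding to $E$ and $F$.
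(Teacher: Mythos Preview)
Your proposal is correct and follows essentially the same route as the paper's proof: verify that $\{E,F\}$ is independently conserved via the laws $L_E,L_F$ in \eqref{eq:cl_n-site}, observe that removing $E$ and $F$ from all complexes yields a monomolecular network, and then apply \Cref{thm:enzyme-open-mono}. The only cosmetic difference is that the paper writes out the projected network explicitly, checks weak reversibility and connectedness, and computes $\delta = (3n+1)-1-3n=0$ by hand before invoking \Cref{thm:DefZero}, whereas you short-circuit this via \Cref{rmk:monomolecular}; both arrive at the same conclusion by the same mechanism.
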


\begin{proof}
    First, note that $E$ and $F$ are independently conserved in $\mathcal{P}^n$. Indeed, $E$ is in conservation law $L_E$ but not $L_F$ (see \eqref{eq:cl_n-site}), and similarly $F$ is in law $L_F$ but not $L_E$.

    Now consider the network $\mathcal{P}^n_{-\{E,F\}}$. Our goal is to prove that $\mathcal{P}^n_{-\{E,F\}}$ is monostationary. Note that any bimolecular complex in $G$ has either the species $E$ or $F$ on it, so $\mathcal{P}^n_{-\{E,F\}}$ will be monomolecular. Thus the number of complexes $|\C|$ equals the number of species after removing $E$ and $F$, which is $3n+1$ in this case.

    Note also that $\mathcal{P}^n_{-\{E,F\}}$ is weakly reversible with one connected component, as there is a cycle that goes through every complex at least once:

\[\begin{tikzcd}
	& {ES_0} && {ES_1} && \dots && {ES_{n-1}} \\
	{S_0} && {S_1} && {S_2} & \dots & {S_{n-1}} && {S_n} \\
	& {FS_1} && {FS_2} && \dots && {FS_n}
	\arrow[shift left, from=1-2, to=2-1]
	\arrow[from=1-2, to=2-3]
	\arrow[shift left, from=1-4, to=2-3]
	\arrow[from=1-4, to=2-5]
	\arrow[shift left, from=1-6, to=2-5]
	\arrow[from=1-6, to=2-7]
	\arrow[shift left, from=1-8, to=2-7]
	\arrow[from=1-8, to=2-9]
	\arrow[shift left=3, from=2-1, to=1-2]
	\arrow[shift left=3, from=2-3, to=1-4]
	\arrow[shift left=3, from=2-3, to=3-2]
	\arrow[shift left=3, from=2-5, to=1-6]
	\arrow[shift left=3, from=2-5, to=3-4]
	\arrow[shift left=3, from=2-7, to=1-8]
	\arrow[shift left=3, from=2-7, to=3-6]
	\arrow[shift left=3, from=2-9, to=3-8]
	\arrow[from=3-2, to=2-1]
	\arrow[shift left, from=3-2, to=2-3]
	\arrow[from=3-4, to=2-3]
	\arrow[shift left, from=3-4, to=2-5]
	\arrow[from=3-6, to=2-5]
	\arrow[shift left, from=3-6, to=2-7]
	\arrow[from=3-8, to=2-7]
	\arrow[shift left, from=3-8, to=2-9]
\end{tikzcd}\]
    so $\ell=1$. Finally, the dimension of the stoichiometric subspace is $\dim(\imG) = \ker(W)=3n$, as there are $3n+1$ species and only one conservation law remaining. Thus, the deficiency of $\mathcal{P}^n_{-\{E,F\}}$ is $\delta = |\C| - \ell - \dim(\imG) = 3n+1 - 1 -3n = 0$, and by the Deficiency Zero Theorem (\Cref{thm:DefZero}), $\mathcal{P}^n_{-\{E,F\}}$ is monostationary. Thus, by \Cref{thm:enzyme-open-mono}, so is $\mathcal{P}^n_{\{E,F\}\rightleftharpoons 0}$.
\end{proof}

\begin{remark}
     A similar proof shows that opening $\E = \{E, F, S_i\}$ for any $i$ is also monostationary, using the third conservation law. Indeed, in this case the number of species decreases by one, to $3n$, the number of conservation laws decreases to 0, and the number of complexes stays the same, $3n+1$, as the complex for $S_i$ is lost, but we get a new complex for $0$. Thus, the deficiency still is $\delta = 3n+1 - 1 - 3n = 0$.
     
     As for the latter the stoichiometric subspace is already the whole space, the same will be true if $\E$ is given by both enzymes and any subset of substrates, due to \Cref{thm:enlarging_preserving_stoichiometry}.
\end{remark}

\begin{remark}
    Note that this proof still works independently of the number or structure of the intermediates. We could remove intermediates in some steps, add intermediates in others. This will work as long as intermediates appear only once. Otherwise the method could still work, but it would require a more careful consideration.
\end{remark}

\begin{remark}
    
 It is not clear from any theoretical development what would happen to $\mathcal{P}^2_{\E\rightleftharpoons 0}$ with more complicated choices of $\E$. For example, from numerical computations we know that if $\E=\{ E,S_1\}$ of $\E=\{E,S_1,S_2\}$ the network is monostationary, while for $\E=\{ E,S_0\}$ or even $\E=\{ E,S_0,S_1\}$ it admits multistationarity. The results here are also not enough to prove any of these cases.
\end{remark}

In \Cref{sec:ACR} we introduced the property of ACR in a species. It comes as a consequence  of \Cref{thm:open-is-ACR} that the $n$-site double phosphorylation cycle gains ACR in any enzyme when that enzyme is open:

\begin{corollary}
    Let $n\geq 1$ and let $\E$ be any subset of enzymes. Then $\mathcal{P}^n_{\E\rightleftharpoons 0}$ has ACR in all species of $\E$.
\end{corollary}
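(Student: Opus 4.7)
The plan is to obtain the corollary as a direct application of \Cref{thm:open-is-ACR}, whose hypothesis is that the set $\E$ of opened species is independently conserved in $\mathcal{P}^n$. The only nontrivial content, therefore, is to check this hypothesis for every subset $\E\subseteq\{E,F\}$.

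First, I would note that the two conservation laws $L_E$ and $L_F$ listed in \eqref{eq:cl_n-site} witness independent conservation of the full pair $\{E,F\}$: $E$ appears in $L_E$ but not in $L_F$, and symmetrically $F$ appears in $L_F$ but not in $L_E$. This is exactly the block-identity structure required by \Cref{def:independently_conserved}, as was already used in the proof of \Cref{thm:enzyme-open-nsite}. Next, I would observe the essentially trivial but necessary fact that any subset of an independently conserved set is again independently conserved: the witnesses $L_i$ for $E_i\in\E'\subseteq\E$ continue to satisfy the definition, since removing species from $\E$ only drops some of the required witnesses but does not invalidate the remaining ones. In particular, the singletons $\{E\}$ and $\{F\}$ are independently conserved in $\mathcal{P}^n$, and the empty set trivially satisfies the corollary.

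Then I would apply \Cref{thm:open-is-ACR} to $G=\mathcal{P}^n$ and the chosen $\E\subseteq\{E,F\}$, which immediately yields that $\mathcal{P}^n_{\E\rightleftharpoons 0}$ has ACR in every species $X\in\E$, with the explicit steady-state value $x_X=\kappa_{0\to X}/\kappa_{X\to 0}$ given by the rates of the added flow reactions. This completes the argument.

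There is essentially no obstacle in this proof: the work has already been absorbed into \Cref{thm:open-is-ACR} (which in turn rests on \Cref{lemma:SumSteadyStates}) and into the identification of $L_E,L_F$ as witnesses of independent conservation. The only point one must be careful about is the elementary lemma that independent conservation is inherited by subsets, which follows at once from \Cref{def:independently_conserved} by restricting the set of witnesses.
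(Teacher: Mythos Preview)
Your proposal is correct and follows exactly the paper's approach: the paper simply states that this is a direct application of \Cref{thm:open-is-ACR}, since the enzymes are independently conserved in $\mathcal{P}^n$. Your additional remark that independent conservation is inherited by subsets makes the argument for proper subsets $\E\subsetneq\{E,F\}$ explicit, but otherwise the reasoning is identical.
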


This is a direct application of \Cref{thm:open-is-ACR}, since the enzymes are independently conserved in $\mathcal{P}^n$.

\subsection{Extension to cascade networks}\label{sec:cascade_monost}

This approach can be greatly generalized to work on cascade-type networks. These networks have species that act as both \textit{substrates} and \textit{enzymes} in different reactions. For example, consider the following simple cascade diagram:

\[\begin{tikzcd}
	W && {W^*} \\
	\\
	& Z && {Z^*}
	\arrow["{E_1}", curve={height=-18pt}, from=1-1, to=1-3]
	\arrow["{E_2}", curve={height=-18pt}, from=1-3, to=1-1]
	\arrow[""{name=0, anchor=center, inner sep=0}, curve={height=-18pt}, from=3-2, to=3-4]
	\arrow["{E_3}", curve={height=-18pt}, from=3-4, to=3-2]
	\arrow[from=1-3, to=0]
\end{tikzcd}\]

Which gives the reaction network $G$ given by
\begin{small}
\begin{align}\label{eq:cascade-1}
\begin{split}
W + E_1 &\ce{<=>} WE_1 \ce{->} W^* + E_1 \\
W^* + E_2 &\ce{<=>} W^*E_2 \ce{->} W + E_2 \\
Z + W^* &\ce{<=>} ZW^* \ce{->} Z^* + W^* \\
Z^* + E_3 &\ce{<=>} Z^*E_3 \ce{->} Z + E_3 
\end{split}
\end{align}
\end{small}

There are 5 independent conservation laws:
\begin{align}
        L_{E_1}: & \quad x_{E_1} +x_{WE_1} = T_{E_1}\notag \\
        L_{E_2}: & \quad x_{E_2} +x_{W^*E_2} = T_{E_2}\notag \\
        L_{E_3}: & \quad x_{E_3} +x_{Z^*E_3} = T_{E_3} \\
        L_{W}: & \quad x_W + x_{WE_1} + x_{W^*} + x_{W^*E_2} + x_{ZW^*} = T_W\notag \\
        L_{Z}: & \quad x_Z + x_{ZW^*} + x_{Z^*} = x_{Z^*E_3} = T_Z\notag 
\end{align}
    where $T=(T_{E_1}, T_{E_2}, T_{E_3}, T_W, T_Z)$ takes values in $\mathbb{R}^5_{>0}$.

\begin{proposition}\label[proposition]{prop:enzyme-open-cascade}
The cascade network $G$ (\Cref{eq:cascade-1}), with the enzymes $\E = \{E_1, E_2, E_3, W^*\}$ open, $G_{\E\rightleftharpoons0}$, is monostationary.
\end{proposition}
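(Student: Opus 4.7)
The strategy is to apply \Cref{thm:enzyme-open-mono} with $\E=\{E_1,E_2,E_3,W^*\}$, which reduces the claim to two checks: that $\E$ is independently conserved in $G$, and that the projected network $G_{-\E}$ is monostationary.

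For the first check, I would read off the conservation laws listed after \eqref{eq:cascade-1}. Among $L_{E_1},L_{E_2},L_{E_3},L_W$, the species $E_i$ appears only in $L_{E_i}$, and $W^*$ appears in $L_W$ but not in any of $L_{E_1},L_{E_2},L_{E_3}$. Thus, ordering $\E$ as $(E_1,E_2,E_3,W^*)$ and the laws as $(L_{E_1},L_{E_2},L_{E_3},L_W)$, the coefficients on the $\E$-columns form the identity, which is exactly the condition of \Cref{def:independently_conserved}.

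For the second check, I would project out the species of $\E$ from every complex of $G$, replacing each bare occurrence of $W^*$ by the zero complex and noting that the four intermediates $WE_1, W^*E_2, ZW^*, Z^*E_3$ are single species and therefore survive untouched. A direct enumeration of the reactions in \eqref{eq:cascade-1} shows that $G_{-\E}$ is monomolecular (every complex is either a monomer or $0$). By \Cref{rmk:monomolecular} it has deficiency zero and is monostationary. With both hypotheses of \Cref{thm:enzyme-open-mono} in place (taking $l=1$), the theorem immediately yields that $G_{\E\rightleftharpoons 0}$ is monostationary.

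The only real obstacle here is clerical rather than conceptual: when performing the projection one must be careful to treat intermediates such as $ZW^*$ as single species rather than as a sum containing $W^*$, since otherwise the resulting graph will not look monomolecular. Once that distinction is respected the deficiency zero criterion finishes the argument, and no dynamical analysis of the enzyme-opened cascade is needed, because \Cref{thm:enzyme-open-mono} has already packaged the ACR and steady-state-reduction ingredients proved in \Cref{sec:ACR}.
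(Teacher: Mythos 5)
Your proposal is correct and follows essentially the same route as the paper: verify that $\E=\{E_1,E_2,E_3,W^*\}$ is independently conserved via $L_{E_1},L_{E_2},L_{E_3},L_W$, observe that $G_{-\E}$ is monomolecular (hence deficiency zero and monostationary), and conclude by \Cref{thm:enzyme-open-mono}. The paper computes the deficiency $\delta = 8-2-6=0$ explicitly rather than invoking \Cref{rmk:monomolecular}, but this is the same argument, and your caution about treating intermediates like $ZW^*$ as single species in the projection is exactly right.
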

\begin{proof}
    Note that $\E$ is independently conserved, as each enzyme shows up in exactly one of the conservation laws $T_{E_1}, T_{E_2}, T_{E_3}, T_W$. 

    The goal then is to prove that $G_{-\E}$ is monostationary. But the resulting network is

\[\begin{tikzcd}
	& {WE_1} &&& {ZW^*} \\
	W && 0 & Z && {Z^*} \\
	& {W^*E_2} &&& {Z^*E_3}
	\arrow[shift left, from=1-2, to=2-1]
	\arrow[from=1-2, to=2-3]
	\arrow[shift left, from=1-5, to=2-4]
	\arrow[from=1-5, to=2-6]
	\arrow[shift left=3, from=2-1, to=1-2]
	\arrow[shift left, from=2-3, to=3-2]
	\arrow[shift left=3, from=2-4, to=1-5]
	\arrow[shift left, from=2-6, to=3-5]
	\arrow[from=3-2, to=2-1]
	\arrow[shift left=3, from=3-2, to=2-3]
	\arrow[from=3-5, to=2-4]
	\arrow[shift left=3, from=3-5, to=2-6]
\end{tikzcd}\]
    which has deficiency $\delta = |\C| - \ell - \dim \imG_{-\E} = 8 - 2 - (7-1) = 0$, where $\dim \imG_{-\E} = 7 - 1$ as there are 7 species and one conservation law left. Then, by \Cref{thm:DefZero}, $G_{-\
    E}$ is monostationary, and by \Cref{thm:enzyme-open-mono}, so is $G_{\E\rightleftharpoons 0}$.
\end{proof}

To showcase how our method can be used to study more biologically meaningful networks, we will use the example of the mitogen-activated protein kinase (MAPK) cascade. The MAPK cascade (\cref{fig:MAPK-Cascade-1}) is a heavily studied network that models multiple biological behaviors in cells. The model was first developed by \cite{HuangFerrell1996}, who showed that the cascade may exhibit ultrasensitive behavior, and thus it is a good candidate for switch-like processes like mitogenesis or cell-fate determination. The model has been numerically shown to exhibit bistability and oscillatory behavior \cite{Qiao2007,Ferrell1998,Kholodenko2000}, and proven in \cite{Banaji2018} to exhibit nondegenerate multistationarity. Here we will show how our results prove that when opening all species acting like enzymes the network becomes monostationary.

\begin{figure}[h]
    \centering
    \includegraphics[width=0.5\linewidth]{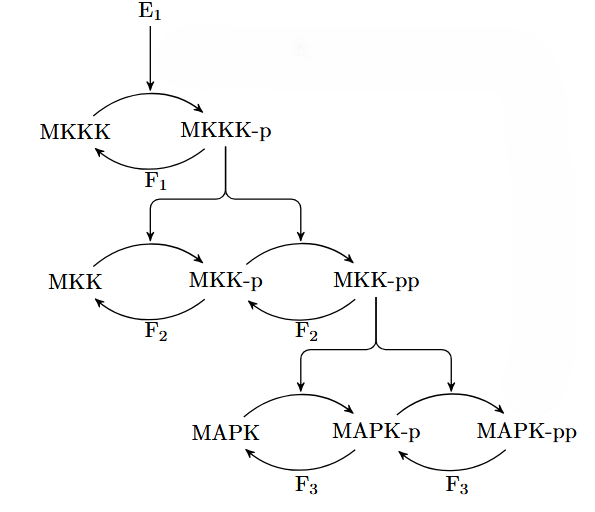}
    \caption{The MAPK cascade network}
    \label{fig:MAPK-Cascade-1}
\end{figure}

As in \cite{Banaji2018}, we use the following abbreviations for readability reasons of the reaction networks: $X = \text{MAPK}, Y = \text{MKK}, Z = \text{MKKK}.$

We call the MAPK network $\mathcal{M}^1$ (\cref{fig:MAPK-Cascade-1}). This is a network with 22 species and 30 reactions, which is represented by the following reactions:

\begin{small}
\begin{align}\label{eq:MAPK-Cascade-1}
\begin{split}
E_1 + Z \ce{<=>} E_1Z \ce{->} E_1 + Z_P, \quad F_1 + Z_P \ce{<=>} F_1Z_P \ce{->} F_1 + Z,\\
Z_P + Y \ce{<=>} Z_PY \ce{->} Z_P + Y_P \ce{<=>} Z_P Y_P \ce{->} Z_P + Y_{PP},\\
F_2 + Y_{PP} \ce{<=>} F_2Y_{PP} \ce{->} F_2 + Y_P \ce{<=>} F_2 Y_P \ce{->} F_2 + Y,\\
Y_{PP} + X \ce{<=>} Y_{PP}X \ce{->} Y_{PP} + X_P \ce{<=>} Y_{PP}X_P \ce{->} Y_{PP} + X_{PP},\\
F_3 + X_{PP} \ce{<=>} F_3 X_{PP} \ce{->} F_3 + X_P \ce{<=>} F_3 X_{P} \ce{->} F_3 + X.
\end{split}
\end{align}
\end{small}

\begin{proposition}\label[proposition]{prop:enzyme-open-cascade-1}
    Let $\mathcal{M}^1_{\E\rightleftharpoons0}$ be the network obtained from the MAPK cascade, $\mathcal{M}^1$ (\Cref{eq:MAPK-Cascade-1}) by opening the species $\E = \{F_1, F_2, F_3, E_1, Z_P, Y_{PP}\}$. Then $\mathcal{M}^1_{\E\rightleftharpoons0}$ is monostationary.
\end{proposition}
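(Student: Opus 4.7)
The plan is to reproduce the strategy used in Theorem \ref{thm:enzyme-open-nsite} and Proposition \ref{prop:enzyme-open-cascade}: verify that $\E$ is independently conserved in $\mathcal{M}^1$, observe that $\mathcal{M}^1_{-\E}$ is (at most) monomolecular, and conclude via Theorem \ref{thm:enzyme-open-mono}.

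First, I would list the conservation laws of $\mathcal{M}^1$. The species $F_1,F_2,F_3,E_1$ are each catalysts confined to a single sub-cycle, so each gives rise to its own conservation law of the form $x_{F_i}+\sum_j x_{F_iC_j}=T_{F_i}$ (and analogously for $E_1$), in which no other species of $\E$ appears. For $Z_P$, the relevant law is the total-$Z$ conservation $L_Z$ grouping $x_Z$, $x_{E_1Z}$, $x_{Z_P}$, $x_{F_1Z_P}$, $x_{Z_PY}$, $x_{Z_PY_P}$; inspection shows $Z_P$ is the only species of $\E$ appearing in $L_Z$. Similarly, $Y_{PP}$ lies in the total-$Y$ conservation $L_Y$ involving $x_Y,x_{Z_PY},x_{Y_P},x_{Z_PY_P},x_{Y_{PP}},x_{F_2Y_{PP}},x_{F_2Y_P},x_{Y_{PP}X},x_{Y_{PP}X_P}$, and among these only $Y_{PP}$ belongs to $\E$. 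Thus the six laws $L_{E_1},L_{F_1},L_{F_2},L_{F_3},L_Z,L_Y$ witness that $\E$ is independently conserved in $\mathcal{M}^1$, in the sense of Definition \ref{def:independently_conserved}.

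Next, I would verify that every bimolecular complex in $\mathcal{M}^1$ contains exactly one species of $\E$: the sources and products of every binding/release reaction in \eqref{eq:MAPK-Cascade-1} are of the form $(\text{enzyme})+(\text{substrate})$, where the enzyme is one of $F_1,F_2,F_3,E_1,Z_P,Y_{PP}$ (the last two acting as enzymes for the downstream tiers). Consequently, removing $\E$ from every complex of $\mathcal{M}^1$ collapses each such bimolecular complex either to a single non-$\E$ species or to $0$, while the intermediate complexes $E_1Z$, $F_1Z_P$, $Z_PY$, $Z_PY_P$, $F_2Y_{PP}$, $F_2Y_P$, $Y_{PP}X$, $Y_{PP}X_P$, $F_3X_{PP}$, $F_3X_P$ are themselves single species not in $\E$ and are unchanged. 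Hence $\mathcal{M}^1_{-\E}$ is monomolecular, and by Remark \ref{rmk:monomolecular} it has deficiency zero and is monostationary.

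With both hypotheses in place, the final statement follows from the last sentence of Theorem \ref{thm:enzyme-open-mono}: since $\E$ is independently conserved and $\mathcal{M}^1_{-\E}$ is monomolecular (hence monostationary), $\mathcal{M}^1_{\E\rightleftharpoons0}$ is monostationary. I expect the only delicate step to be the verification that $Z_P$ and $Y_{PP}$—which play a dual role as products upstream and enzymes downstream—still each have a conservation law isolating them from the other members of $\E$; this is what the careful listing of $L_Z$ and $L_Y$ above is designed to confirm. Everything else is a mechanical check on the complexes of \eqref{eq:MAPK-Cascade-1}.
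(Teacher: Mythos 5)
Your proof is correct and takes essentially the same approach as the paper: independent conservation of $\E$, monostationarity of the projection $\mathcal{M}^1_{-\E}$ (the paper computes the deficiency of the projected graph directly as $17-2-15=0$, while you invoke the monomolecular clause of \Cref{thm:enzyme-open-mono}, which is equivalent via \Cref{rmk:monomolecular}), and then \Cref{thm:enzyme-open-mono}. One tiny slip: the complexes $E_1+Z_P$, $F_1+Z_P$, $Z_P+Y_{PP}$ and $F_2+Y_{PP}$ contain \emph{two} species of $\E$, not exactly one, but since your argument already allows such complexes to project to the $0$ complex, the conclusion that $\mathcal{M}^1_{-\E}$ is at most monomolecular is unaffected.
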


\begin{proof}
    Note that $\mathcal{M}^1$ has 7 concentration laws, for the total amounts of $X, Y, Z, E_1, F_1, F_2, F_3$, and that the set of the species behaving like enzymes, $\E = \{F_1, F_2, F_3, E_1, Z_P, Y_{PP}\}$ is independently conserved in $\mathcal{M}^1$. Thus, by \Cref{thm:enzyme-open-mono}, it is enough to prove that the projected network $\mathcal{M}^1_{-\E}=(\mathcal{M}^1_{\E\rightleftharpoons 0})_{-\E}$ is monostationary to show that so is $\mathcal{M}^1_{\E\rightleftharpoons 0}$. Then $\mathcal{M}^1_{-\E}$ is given by:

\[\begin{tikzcd}
	& {E_1Z} && {Z_PY_P} && {Z_PY} \\
	Z && 0 && {Y_P} && Y \\
	& {F_1Z_P} && {F_2Y_{PP}} && {F_2Y_P} \\
	& {Y_{PP}X} && {Y_{PP}X_P} \\
	X && {X_P} && {X_{PP}} \\
	& {F_3X_P} && {F_3X_{PP}}
	\arrow[shift left, from=1-2, to=2-1]
	\arrow[from=1-2, to=2-3]
	\arrow[from=1-4, to=2-3]
	\arrow[shift left=3, from=1-4, to=2-5]
	\arrow[from=1-6, to=2-5]
	\arrow[shift left=3, from=1-6, to=2-7]
	\arrow[shift left=3, from=2-1, to=1-2]
	\arrow[shift left, from=2-3, to=3-2]
	\arrow[shift left, from=2-3, to=3-4]
	\arrow[shift left, from=2-5, to=1-4]
	\arrow[shift left, from=2-5, to=3-6]
	\arrow[shift left, from=2-7, to=1-6]
	\arrow[from=3-2, to=2-1]
	\arrow[shift left=3, from=3-2, to=2-3]
	\arrow[shift left=3, from=3-4, to=2-3]
	\arrow[from=3-4, to=2-5]
	\arrow[shift left=3, from=3-6, to=2-5]
	\arrow[from=3-6, to=2-7]
	\arrow[shift left, from=4-2, to=5-1]
	\arrow[from=4-2, to=5-3]
	\arrow[shift left, from=4-4, to=5-3]
	\arrow[from=4-4, to=5-5]
	\arrow[shift left=3, from=5-1, to=4-2]
	\arrow[shift left=3, from=5-3, to=4-4]
	\arrow[shift left, from=5-3, to=6-2]
	\arrow[shift left, from=5-5, to=6-4]
	\arrow[from=6-2, to=5-1]
	\arrow[shift left=3, from=6-2, to=5-3]
	\arrow[from=6-4, to=5-3]
	\arrow[shift left=3, from=6-4, to=5-5]
\end{tikzcd}\]

 which has deficiency $\delta = |\C| - \ell - \dim \imG_{-\E} = 17 - 2 - (16-1) = 0$, where $\dim \imG_{-\E} = 16 - 1$ as there are 16 species and one conservation law left. Then, by \Cref{thm:DefZero}, $\mathcal{M}^1_{-\E}$ is monostationary and, by \Cref{thm:enzyme-open-mono}, so is $\mathcal{M}^1_{\E\rightleftharpoons 0}$.
\end{proof}

\section{Discussion and Future Work}

Our results give a compact, structural picture of how semi-opening a phosphorylation–dephosphorylation cycle reshapes its steady-state landscape. Working under mass action kinetics we show that these networks are remarkably asymmetric with respect to which species are allowed to exchange with an external reservoir: permitting any nonempty set of substrates to flow in and out preserves the network’s capacity for (nondegenerate) multistationarity, while permitting the enzymes to flow (even when some substrates are also opened) destroys that capacity and yields a monostationary system. The two claims are proved by different but complementary means. The substrate result is constructive: steady states and their nondegeneracy extend inductively as phosphorylation sites are added, so adding more phosphorylation sites does not remove the algebraic mechanism by which multiple positive equilibria arise, even when some (or all) substrates are open. By contrast, the enzyme result is structural: opening an independently conserved enzyme subsystem produces ACR in those species, and a projection over the remaining species often reduces the dynamics to a monostationary core (for example a monomolecular, weakly reversible projection where we can apply deficiency-zero arguments). Thus ACR detection together with projection gives a simple criterion (\cref{thm:enzyme-open-mono}) that explains why enzyme exchange acts as a robust “switch” from multistationarity to uniqueness of equilibrium. As an additional result we obtain here that the processive $n$-site double phosphorylation cycle with both enzymes open has ACR in these, so there is certain robustness in their concentration too.

Biologically, this dichotomy has an intuitive interpretation. Substrates are the carriers of modification states and can be coupled to downstream processes without losing their ability to support multiple steady states; their exchange models consumption, synthesis, or wiring to other pathways, and so preserves the cycle’s switching potential. Enzymes, on the other hand, are the scarce, conserved mediators whose conservation and reuse create the competition that produces multiple steady states; allowing enzymes to be replenished or removed from the local system breaks that conservation and collapses the mechanism for multistationarity. This perspective complements and sharpens earlier observations about fully open networks and entrapped species: it identifies which classes of species matter most for switching and why.

The paper also contributes methodological tools of independent interest. The inductive construction for extending steady states across sites is elementary and broadly applicable to multisite motifs; the ACR-plus-projection reduction ties robustness properties of a small, open subsystem to global conclusions via classical network theorems. These tools are relatively easy to check in enzyme-centric architectures and thus provide a practical route to diagnose when semi-open modifications will preserve or eliminate multistationarity.

In short, by isolating the different roles of substrates and enzymes and by connecting ACR and projection to classical deficiency arguments, this work provides a compact, testable explanation for when environmental exchange preserves biochemical switching and when it enforces unique steady states, a distinction with direct implications for modeling, experimental design, and synthetic circuit engineering.

This work has clear limitations: our hypotheses include an independent-conservation condition for the open enzymes, and all results are proven with limited claims about stability. Future work will aim at combining our structural results with bifurcation and stability analyses. But first we will explore the general use of these tools in future work and apply the framework to larger cascade and phosphorylation families and experimentally grounded models. We will also study how to best use these new tools together with the extensive tools already found in the literature in the more general context of embedded networks and inheritance results.

\section*{Acknowledgments}

We acknowledge the Max Planck Institute for the Mathematical Sciences in Leipzig, Germany as well as the Simons Laufer Mathematical Sciences Institute for funding and organizing the MSRI-MPI Leipzig Summer Graduate School on Algebraic Methods for Biochemical Reaction Networks in June 2023, where the initial steps of this research were taken, as well as the instructors of the school for suggesting these questions.

PN would like to thank Dr. Philippe Nghe for constructive discussion and funding through Grant ERC AbioEvo (101002075).
BP-E has been funded by the Spanish Ministry of Economy project with reference number PID2022-138916NB-I00.
DR was supported in part by the Fulbright Program.

\bibliographystyle{alpha} 
\bibliography{references}

\end{document}